\newtheorem{pro}{Proposition}[section]
\newtheorem{teo}[pro]{Theorem}
\newtheorem{defi}[pro]{Definition}
\newtheorem{lem}[pro]{Lemma}
\newtheorem{cor}[pro]{Corollary}
\newtheorem{remark}[pro]{Remark}
\newtheorem{ex}[pro]{Example}
\newcommand{\pd}{{\mathrm{pd}}}
\newcommand{\Gpd}{{\mathrm{Gpd}}}
\newcommand{\modu}{{\mathrm{mod}}}
\newcommand{\ind}{{\mathrm{ind}}}
\newcommand{\Ker}{{\mathrm{Ker}}}
\newcommand{\Gproj}{{\mathrm{Gproj}}}
\newcommand{\id}{{\mathrm{id}}}
\newcommand{\findim}{{\mathrm{fin.dim}}}
\newcommand{\resdim}{{\mathrm{resdim}}}
\newcommand{\coresdim}{{\mathrm{coresdim}}}
\newcommand{\Hom}{{\mathrm{Hom}}}
\newcommand{\End}{{\mathrm{End}}}
\newcommand{\Ext}{{\mathrm{Ext}}}
\newcommand{\Phidim}{{\Phi\,\mathrm{dim}}}
\newcommand{\Psidim}{{\Psi\,\mathrm{dim}}}
\newcommand{\add}{{\mathrm{add}}}
\newcommand{\D}{\mathcal{D}}
\newcommand{\X}{\mathcal{X}}
\newcommand{\Y}{\mathcal{Y}}
\newcommand{\p}{\mathcal{P}}
\newcommand{\Z}{\mathbb{Z}}
\title{Generalised Igusa-Todorov functions and Lat-Igusa-Todorov algebras}
\author{Diego Bravo$^{1}$, Marcelo Lanzilotta$^{1}$, Octavio Mendoza$^{2}$\\ and Jos\'e Vivero$^{1}$}
\thanks{$^{1}$ GIA - Grupo de Investigaci\'on en \'Algebra (CSIC 323725), Instituto de Matem\'atica y Estad\'istica ``Rafael Laguardia'', Facultad de Ingenier\'\i a, Universidad de la Rep\'ublica, Uruguay.}
\thanks{$^{2}$ Instituto de Matem\'aticas, Universidad Nacional Aut\'onoma de M\'exico, M\'exico.}
\thanks{The third author thanks the project PAPIIT-Universidad Nacional Aut\'onoma de M\'exico IN100520.}
\numberwithin{equation}{section}
\begin{document}
\maketitle


\begin{center}
\small{dbravo@fing.edu.uy, 
marclan@fing.edu.uy, 
omendoza@matem.unam.mx, 
jvivero@fing.edu.uy} 
\end{center}

\begin{center}
\end{center}

\begin{abstract}
In this paper we study a generalisation of the Igusa-Todorov functions which gives rise to a vast class of algebras satisfying the finitistic dimension conjecture. This class of algebras is called Lat-Igusa-Todorov and includes, among others, the Igusa-Todorov algebras (defined by J. Wei) and the self-injective algebras which in general are not Igusa-Todorov algebras. Finally, some applications of the developed theory are given in order to relate the different homological dimensions which have been discussed through the paper.
\\

Keywords: generalised Igusa-Todorov functions, finitistic dimension conjecture, $n$-Lat-Igusa-Todorov algebras, relative homological dimensions.\\

AMS Subject Classification: 16E05; 16E10; 16G10.
\end{abstract}

\section{Introduction}
The aim of this work is to study the homological properties of the category $\modu\,(\Lambda)$ formed by all the finitely generated left $\Lambda$-modules over an Artin algebra $\Lambda.$  For instance, the finitistic dimension of the Artin  algebra $\Lambda$ is defined by $\findim\,(\Lambda):=\sup\{\pd\,M\;:\;\ M\in \modu\,(\Lambda) \;\text{and}\;\pd\,M<\infty\}$, where $\pd\,M$ is the projective dimension of $M.$ This definition gave rise to a famous conjecture, which is the main reason for this article, stating that for any Artin algebra $\Lambda$ there is a bound for the finitistic dimension $\findim\,(\Lambda).$ For further reading into the history of the fintistic dimension conjecture, we recommend \cite{Z}.

In \cite{W}, J. Wei gave the definition of  Igusa-Todorov algebra, by using the functions defined by K. Igusa and G. Todorov in \cite{IT}, and proved that these algebras satisfy the finitistic dimension conjecture. He also provided an extensive list of algebras that turned out to be Igusa-Todorov algebras, conjecturing that all Artin algebras were Igusa-Todorov.  But later on, it was proved that, within the class of self-injective algebras, there are examples of algebras that do not satisfy Wei's definition. More details and references for this are given in Section 4. One of the objectives of this work is to fix this gap by finding an alternative through a more general definition. In order to do so, we came up with a generalisation of the Igusa-Todorov functions, providing us with an appropriate setting to achieve our purpose. This gave rise to a new class of algebras named Lat-Igusa-Todorov algebras. This class of algebras also satisfies the finitistic dimension conjecture and strictly includes the class of algebras defined by Wei, since self-injective algebras verify this new definition.

The technique we have used to generalise the Igusa-Todorov functions is independent of the specific objectives of this work and we think it can be of great interest, not only to the finitistic dimension conjecture, but for other problems in representation theory. 
\

The content of this paper can be summarised as follows. In Section 2, it is collected some necessary material for the developing of this work. In order to clarify our exposition in this section, we bring out to the light the notion of relative resolution dimension. Namely, for an Artin algebra $\Lambda,$ let 
$\X\subseteq\modu\,(\Lambda)$ and $M\in\modu\,(\Lambda).$ The $\X$-resolution dimension $\resdim_\X(M)$ of $M$ is the minimal non-negative integer $n$ such that there is an exact sequence $0\to X_n\to\cdots \to X_1\to X_0\to M\to 0$ with $X_i\in\X$ for all $i.$ If such $n$ does not exist, 
we set $\resdim_\X(M):=\infty.$ We also consider the class $\X^\wedge:=\{M\in\modu\,(\Lambda)\;:\; \resdim_\X(M)<\infty\}.$
\

In Section 3, it is given the proposed generalisation of the Igusa-Todorov functions $\Phi,\Psi:\modu\,(\Lambda)\to \mathbb{N},$ for an Artin algebra 
$\Lambda,$ which were firstly introduced in \cite{IT}. In order to do this, let $K$ be the free abelian group generated by the set of all the iso-classes 
$[M],$ for $M\in \modu\,(\Lambda),$ modulo the relation $[M]-[X]-[Y]$ if $M\simeq X\oplus Y.$ For a given subclass $\X\subseteq \modu\,(\Lambda)$ 
such that $\add\,\X=\X,$ we denote by $\langle\X\rangle$ the free abelian subgroup of $K$ generated by the set of all the iso-classes of indecomposable 
 objects in $\X\cup \p,$ where $\p$ is the class of all the projective objects in $\modu\,(\Lambda).$ In the case of a single object 
 $X\in\modu\,(\Lambda),$ we set $\langle X\rangle:=\langle\add(X)\rangle.$\\
Let us fix a subclass $\D\subseteq \modu\,(\Lambda)$ such that $\add\,\D=\D$ and $\Omega(\D)\subseteq \D.$ Hence, the quotient group $K_\D:=K/\langle\D\rangle$ is  free abelian. Note that $K_\D$ is  the free abelian group generated by the set of all the iso-classes of indecomposable non-projective $\Lambda$-modules that do not lie in $\D.$ Moreover,  the syzygy operator $\Omega$ on $\modu\,(\Lambda)$ induces a morphism 
$\overline{L}:K_\D\to K_\D,$ $[M]+\langle \D\rangle\mapsto [\Omega M]+\langle \D\rangle,$ of abelian groups.\\
For $X\in\modu\,(\Lambda),$ we consider the abelian subgroup $\overline{\langle X\rangle}$ of $K_\D$ given by $\overline{\langle X\rangle}:=(\langle X\rangle+\langle\D\rangle)/\langle\D\rangle.$ By  Fitting's Lemma, there exists a minimum non negative integer, denoted by $\Phi_{[\D]}(X),$ such that 
$\overline{L}:\overline{L}^m(\overline{\langle X\rangle})\to :\overline{L}^{m+1}(\overline{\langle X\rangle})$ is an isomorphism for any 
$m\geq \Phi_{[\D]}(X).$ Thus, we get a function $\Phi_{[\D]}:\modu\,(\Lambda)\to \mathbb{N}.$ Note that, for the case $\D=\p,$ we have that 
$\Phi_{[\D]}=\Phi,$ where $\Phi$ is the usual Igusa-Todorov function introduced in \cite{IT}.\\
Once we have defined the function $\Phi_{[\D]},$ we get the function $\Psi_{[\D]}:\modu\,(\Lambda)\to \mathbb{N}$ defined as follows
\begin{center}
$\Psi_{[\D]}(X):=\Phi_{[\D]}(X) + \findim\,(\add\,\Omega^{\Phi_{[\D]}(X)}(X)),$
\end{center}
where $\findim(\mathcal{Z}):=\sup\{\pd(Z)\;:\; Z\in\mathcal{Z}\;\text{and}\;\pd(Z)<\infty\}.$
We point out that, for $\D=\p,$ we have that 
$\Psi_{[\D]}=\Psi,$ where $\Psi$ is the usual Igusa-Todorov function introduced in \cite{IT}.\\
For a subclass $\X\subseteq\modu\,(\Lambda)$ and a function $\Gamma:\modu\,(\Lambda)\to \mathbb{N},$ we introduce the $\Gamma$-dimension of 
$\X,$ defined by 
\begin{equation}\label{gamma}
\Gamma\mathrm{dim}(\X):=\sup\{\Gamma(X)\;:\;X\in\X\}.
\end{equation}
If $\X=\modu\,(\Lambda),$ we get the $\Gamma$-dimension $\Gamma\mathrm{dim}(\Lambda)$ of the algebra $\Lambda,$ which is $\Gamma\mathrm{dim}(\Lambda):=\Gamma\mathrm{dim}(\modu\,(\Lambda)).$ For the particular cases, when $\Gamma=\Phi_{[\D]}$ and $\Gamma=\Psi_{[\D]},$ we get the relative dimensions $\Phi_{[\D]}\text{dim}(\X)$ and 
$\Psi_{[\D]}\text{dim}(\X).$ Moreover, we have the homological dimensions  $\Phi_{[\D]}\text{dim}(\Lambda)$ and $\Psi_{[\D]}\text{dim}(\Lambda)$ of the given algebra $\Lambda.$\\
The main result of Section 3 can be stated as follows. For details, see Theorem \ref{thm_comparison} and Proposition \ref{previoLIT}.
\

{\bf Theorem A} Let $\Lambda$ be an Artin algebra and let $\D\subseteq\modu\,(\Lambda)$ be such that $\add\,\D=\D$ and 
$\Omega(\D)\subseteq \D.$  Then, for every $X\in \modu\,(\Lambda),$ the following statements hold true.
\begin{itemize}
\item[(a)] $\Phi(X)\leq \Phi_{[\D]}(X)+\Phidim(\D).$ 
\item[(b)] If $\Phidim(\D)=0$ then $\Psi(X)\leq \Psi_{[\D]}(X).$
\end{itemize}
\noindent
The additional condition $\Phidim(\D)=0,$ in the above theorem, is necessary in the sense that the inequality $\Psi(X)\leq\Psi_{[\D]}(X)+\Psidim(\D)$ does not hold in general, as can be seen in the example given in Remark \ref{contraejemplo}.
\

In Section 4, we do some applications of the developed theory. The main result of this section is the following theorem, see details in Theorem \ref{Phidimcota'}.
\

{\bf Theorem B} For an Artin algebra $\Lambda,$  a non-negative integer $n$ and  a left saturated subclass $\D$ in $\modu\,(\Lambda)$ such that $\Phidim(\D)=0,$ the following statements hold true. 
\begin{itemize}
\item[(a)] $\findim(\Lambda)\leq\Phidim(\Lambda)\leq\Psidim(\Lambda)\leq\resdim_\D(\Omega^n(\modu\,(\Lambda)))+n.$
\item[(b)] Let $\omega$ be closed  under direct summands and a $\D$-injective relative cogenerator in $\D.$  If $\Omega^n(\modu\,(\Lambda))\subseteq \D^\wedge,$ then 
$$\findim(\Lambda)\leq\Phidim(\Lambda)\leq\Psidim(\Lambda)\leq\id(\omega)+n.$$
\end{itemize}
\noindent
A similar result is also discussed, see Theorem \ref{Phidimcota}, if it is not assumed that $\Phidim(\D)=0.$ As an application of Theorem B, we can get the following result which is stated in Theorem \ref{AppCotilting}.
\

{\bf Theorem C} Let $\Lambda$ be an Artin algebra and $T\in\modu\,(\Lambda)$ be a cotilting $\Lambda$-module. Then, the following statements hold true. 
\begin{itemize}
\item[(a)] $\id(T)=\resdim_{{}^\perp T}(\modu\,(\Lambda)).$
\item[(b)] $\findim(\Lambda)\leq\Phidim(\Lambda)\leq\id(T)+\Phidim({}^\perp T).$
\item[(c)] If $\Phidim({}^\perp T)=0,$ then  $\findim(\Lambda)\leq\Phidim(\Lambda)\leq\Psidim(\Lambda)\leq \id(T).$
\end{itemize}

Another application of Theorem B is related with Gorenstein homological algebra. Indeed, the global Gorenstein projective dimension of an Artin algebra
 $\Lambda$ is $\mathrm{gl.Gpdim}(\Lambda):=\sup\{\Gpd(M)\;:\;M\in\modu\,(\Lambda)\},$ where $\Gpd(M)$ is the Gorenstein projective dimension of 
 $M.$ The following result generalises \cite[Theorem 4.7]{LM}, for a complete version see Theorem \ref{CorGPdim}.

{\bf Theorem D}  For an Artin algebra $\Lambda,$ with  $\mathrm{gl.Gpdim}(\Lambda)<\infty,$ we have that  
$$\findim(\Lambda)=\Phidim(\Lambda)=\Psidim(\Lambda)=\mathrm{gl.Gdim}(\Lambda)\leq \id({}_\Lambda\Lambda).$$

Finally, in Section 5, we give the definition of Lat Igusa-Todorov algebra which is presented in what follows.
\

{\bf Definition} An $n$-Lat-Igusa-Todorov algebra ($n$-LIT-algebra, for short), where $n$ is a non-negative integer, is an Artin algebra $\Lambda$ satisfying the following two conditions:
\begin{itemize}
\item[(a)]  there is some class $\D\subseteq\modu\,(\Lambda)$ such that $\add\,\D=\D,$  $\Omega(\D)\subseteq\D$ and $\Phidim\,(\D)=0;$
\item[(b)] there is some $V\in\modu\,(\Lambda)$ satisfying that each $M\in \modu\,(\Lambda)$ admits an exact sequence 
$$0\longrightarrow X_1\longrightarrow X_0\longrightarrow \Omega^nM\longrightarrow 0,$$ such that $X_1=V_1\oplus D_1$, $X_0=V_0\oplus D_0$, with $V_1,V_0\in \add\,V$ and $D_1,D_0\in \D.$
\end{itemize}
In case we need to specify the class $\D$ and the $\Lambda$-module $V,$ in the above definition, we say that $\Lambda$ is a $(n,V,\D)$-LIT-algebra.
\

It is also shown that the class of Lat Igusa-Todorov algebras strictly contains the class of Igusa-Todorov algebras. The main result of this section is the following one which can be seen in Theorem \ref{MainResult}.
\

{\bf Theorem E} If $\Lambda$ is a $(n,V,\D)$-LIT-algebra, then $$\findim\,(\Lambda)\leq \Psi_{[\D]}(V)+n+1<\infty.$$

\section{Preliminaries}

In this section, we collect some necessary material for the developing of this article. Some general notation will be given which are related with the relative projective (injective) and resolution (coresolution) dimensions with respect to some class $\X$ in $\modu\,(\Lambda),$ for any Artin algebra 
$\Lambda.$
\

We denote by $\p$ or $\p_\Lambda$ the class of all the finitely generated projective $\Lambda$-modules. For some 
$\X\subseteq\modu\,(\Lambda)$ and a non-negative integer $i,$ the $i$-th right orthogonal class of $\X$ is 
$\X^{\perp_i}:=\{M\in \modu\,(\Lambda)\,:\;\Ext^i_\Lambda(-,M)|_\X=0\};$ the  right  orthogonal category of $\X$ is 
$\X^\perp:=\cap_{i\geq 1}\X^{\perp_i}.$ Analogously, we have the $i$-th left orthogonal class ${}^{\perp_i}\X$ and the left orthogonal 
class ${}^{\perp}\X$ of $\X.$
\

Throughout this work, we will use freely some notations and considerations given in \cite{AB, BMPS}. Let $\X\subseteq\modu\,(\Lambda),$ it is said that $\X$ is {\bf pre-resolving} if it is closed under extensions and kernels of epimorphisms between its objects. We say that $\X$ is 
{\bf left thick} if $\X$ is pre-resolving and closed under direct summands in $\modu\,(\Lambda).$ A pre-resolving class which contains the class 
$\p,$ of projective $\Lambda$-modules, is called {\bf resolving}. A {\bf left saturated} class is a resolving class which is closed under direct summands in 
$\modu\,(\Lambda).$ Dually, we have the notions of pre-coresolving, right thick, coresolving and right saturated. For example, the class ${}^{\perp}\X$ is left saturated and $\X^\perp$ is right saturated in $\modu\,(\Lambda).$
\medskip

{\sc Relative homological dimensions.}  Let $\Lambda$ be an Artin algebra, $\X\subseteq\modu\,(\Lambda)$ and $M\in\modu\,(\Lambda).$ The {\bf relative projective dimension} of $M,$ with 
 respect to $\X,$ is 
  $$\pd_{\X}\,(M):=\min\{n\in\mathbb{N}\,:\,\Ext^j_\Lambda(M,-)|_{\X}=0  \text{ for any } j>n\},$$
 where $\min\emptyset:=\infty.$   Dually, we 
  have that $\id_{\X}\,(M)$ is the  {\bf{relative injective dimension}} of
  $M,$ with respect to $\X.$ Furthermore, for any  $\Y\subseteq\modu\,(\Lambda),$ we set $$\pd_\X\,(\Y):=\mathrm{sup}\,\{\pd_\X (Y)\;:\; Y\in\Y\}\text{ and }\id_\X\,(\Y):=\mathrm{sup}\,\{\id_\X(Y)\;:\; Y\in\Y\}.$$ 
 It can be shown, see \cite{AB}, that $\pd_\X\,(\Y)=\id_\Y\,(\X).$ For $\X=\modu\,(\Lambda),$ we have that $\pd_\X(\Y)=\pd\,(\Y)$ and   
 $\id_\X(\Y)=\id\,(\Y).$
\medskip

{\sc Relative cogenerators and generators.} Let $\Lambda$ be an Artin algebra and $(\X,\omega)$ be a pair of classes of objects in $\modu\,(\Lambda).$ The class $\omega$ 
is {\bf $\X$-injective} if $\id_\X(\omega)=0.$ It is said that $\omega$ is a {\bf relative cogenerator} in $\X$ if $\omega\subseteq\X$ and, for 
any $X\in\X,$ there is an exact sequence $0\to X\to W\to X'\to 0,$ with $W\in\omega$ and $X'\in\X.$ 
Dually, we have the notions of {\bf $\X$-projective} and {\bf relative generator} in $\X.$
\medskip
 
 {\sc Resolution and coresolution dimension.}
Let $\Lambda$ be an Artin algebra, $M\in\modu\,(\Lambda)$ and $\X\subseteq\modu\,(\Lambda).$ The $\X$-{\bf coresolution dimension} $\coresdim_\X\,(M)$ of $M$ is the minimal 
non-negative integer $n$ such that there is an exact sequence $$0\to M\to X_0\to X_1\to\cdots\to X_n\to 0$$ with $X_i\in\X$ for 
$0\leq i\leq n.$ If such $n$ does not exist, we set $\coresdim_\X\,(M):=\infty.$ We denote by $\mathcal{X}^{\vee}_n$ the  class of modules $M\in\modu\,(\Lambda)$ with $\coresdim_\X\,(M)\leq n$.  We also consider the class $\mathcal{X}^{\vee}:=\bigcup_{n\geq 0}\mathcal{X}^{\vee}_n.$ 
Dually, we have the $\X$-{\bf{resolution dimension}} $\resdim_\X\,(M)$ of $M,$ and the classes $\mathcal{X}^{\wedge}_n$  and 
$\mathcal{X}^{\wedge}:=\bigcup_{n\geq 0}\mathcal{X}^{\wedge}_n.$ 
Given a class $\Y\subseteq\modu\,(\Lambda),$ we set  
$$\coresdim_\X\,(\Y):=\mathrm{sup}\,\{\coresdim_\X\,(Y)\;
:\; Y\in\Y\},$$ and  $\resdim_\X\,(\Y)$ is defined dually.
\medskip

{\sc Cotorsion pairs.} The notion of cotorsion pair was introduced by L. Salce in \cite{S}. It is the analog of a torsion pair, where the functor 
$\Hom_\Lambda(-,-)$ is replaced by $\Ext^1_\Lambda(-,-).$
\

Let $\Lambda$ be an Artin algebra. A {\bf cotorsion pair} in $\modu\,(\Lambda)$ is a pair $(\X,\Y)$ of classes of objects in $\modu\,(\Lambda)$ satisfying that  
$\X={}^{\perp_1}\Y$ and  $\X^{\perp_1}=\Y.$ A cotorsion pair  $(\X,\Y)$ in $\modu\,(\Lambda)$ is {\bf complete} if, for any $M\in\modu\,(\Lambda),$ there are 
exact sequences $0\to Y'\to X'\to M\to 0$ and $0\to M\to Y\to X\to 0,$ where $X,X'\in\X$ and $Y,Y'\in\Y.$ Finally, a cotorsion pair  $(\X,\Y)$ in $\modu\,(\Lambda)$
is {\bf hereditary} if $\X$ is resolving and $\Y$ is coresolving.
\medskip

{\sc Cotilting modules.} Following Y. Miyashita in \cite{M}, we recall the definition of cotilting object in $\modu\,(\Lambda),$ for an Artin algebra $\Lambda.$ 
\

Let $T\in\modu\,(\Lambda).$ It is said that $T$ is {\bf cotilting} provided that: $\id\,T<\infty,$  $\Ext^i_\Lambda(T,T)=0,$ for any $i>0,$ and  
$\resdim_{\add\,T}(I)<\infty,$ for any injective $I\in\modu\,(\Lambda).$

\section{Generalised Igusa-Todorov Functions}

This section is devoted to an application of the Fitting's Lemma, a well known result proved by the German mathematician Hans Fitting in the 1930's. The ideas we are presenting here have their origin in the article \cite{IT}, where the functions that nowadays are known as Igusa-Todorov functions were firstly introduced. Let us start by recalling the Fitting's Lemma.

\begin{lem}\label{Fitting} Let $R$ be a noetherian ring. Consider a left $R$-module  $M$ and $f\in \End_R(M)$. Then, for any finitely generated $R$-submodule $X$ of $M$, there is a non-negative integer 
$$\eta_f(X) :=\min\{ k\in \mathbb{N}\,:\, f|_{f^{m}(X)}: f^{m}(X)\stackrel{\sim}{\rightarrow} f^{m+1}(X),\ \forall m\geq k\}.$$ 
Furthermore, for any $R$-submodule $Y$ of $X$, we have that $\eta_f(Y)\leq \eta_f(X)$.
\end{lem}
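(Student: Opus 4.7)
The plan is to exploit Noetherianity on $X$ to stabilise the ascending chain of $f$-torsion submodules, then read off both injectivity (at each relevant stage) and the monotonicity under inclusion.

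First, I would note that the chain is the increasing sequence of $R$-submodules of $X$ given by
\[
K_0 \subseteq K_1 \subseteq K_2 \subseteq \cdots, \qquad K_n := \{x\in X : f^n(x)=0\}=\ker(f^n)\cap X.
\]
Since $R$ is left noetherian and $X$ is finitely generated, $X$ is a noetherian $R$-module, so this chain stabilises; let $k$ denote the smallest index with $K_m=K_k$ for all $m\geq k$.

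Next I would verify that this $k$ does the job. Surjectivity of $f|_{f^m(X)} \colon f^m(X)\to f^{m+1}(X)$ is automatic from the definition of $f^{m+1}(X)=f(f^m(X))$. For injectivity when $m\geq k$, take $y \in f^m(X)$ with $f(y)=0$, write $y=f^m(x)$ for some $x\in X$, and observe $f^{m+1}(x)=0$, i.e.\ $x\in K_{m+1}=K_m$, so $y=f^m(x)=0$. Hence the set of integers satisfying the stated property is non-empty, and it has a minimum, namely $\eta_f(X)$.

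For the monotonicity statement, let $Y$ be an $R$-submodule of $X$; note that $Y$ is also finitely generated (since $X$ is noetherian), so $\eta_f(Y)$ is defined. Set $k=\eta_f(X)$. Surjectivity of $f|_{f^m(Y)}\colon f^m(Y)\to f^{m+1}(Y)$ is again automatic. For injectivity, simply observe $f^m(Y)\subseteq f^m(X)$; since $f|_{f^m(X)}$ is injective for $m\geq k$, the same holds for its restriction to $f^m(Y)$. Therefore the property defining $\eta_f(Y)$ is already met by $k$, which yields $\eta_f(Y)\leq k=\eta_f(X)$. The only subtlety I anticipate is making sure $Y$ is itself finitely generated so that $\eta_f(Y)$ is defined, but this is immediate from the noetherian hypothesis.
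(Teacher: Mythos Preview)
Your proof is correct. The ascending chain of kernels $K_n=\ker(f^n)\cap X$ stabilises by noetherianity of $X$, and you correctly extract both the existence of $\eta_f(X)$ and the monotonicity under inclusion from this stabilisation; the observation that $Y$ is automatically finitely generated (as a submodule of a noetherian module) is exactly the point needed to make $\eta_f(Y)$ well-defined.

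There is nothing to compare against: the paper states this lemma as a recollection of the classical Fitting Lemma and gives no proof. Your argument is the standard one.
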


In \cite{IT},  K. Igusa and G. Todorov considered the free abelian group generated by the iso-classes 
$[X]:=\{Y\in\modu\,(\Lambda)\;:\;Y\simeq X\}$  of indecomposable non-projective $X\in\modu\,(\Lambda), $ where $\Lambda$ is an Artin algebra. They defined a group endomorphism $L[X]=[\Omega X],$ where $\Omega X$ is the first syzygy of the $\Lambda$-module $X.$ Moreover, for every 
$X\in\modu\,(\Lambda),$ it is associated the finitely generated subgroup $\langle X\rangle$ generated by the iso-classes of all the indecomposable non-projective elements in 
$\add\,X$. The image of $\langle X\rangle$ by $L$ is a free abelian group of finite rank. By means of Fitting's Lemma, it is possible to find a minimum integer $n$ such that the map $L: L^m(\langle X\rangle)\rightarrow L^{m+1}(\langle X\rangle)$ is an isomorphism, for all $m\geq n$. In this fashion, a function $\Phi: \modu\,(\Lambda)\rightarrow \mathbb{N}$ can be defined as $\Phi(X):=\eta_{L}(\langle X\rangle)$, where the right side corresponds to the integer obtained by using Fitting's Lemma. Associated to this function, there is another one denoted by $\Psi$ and these are commonly referred to as the Igusa-Todorov functions on $\modu\,(\Lambda).$ For more details on the definition of these functions and their properties, we refer the reader to \cite{IT}. Nevertheless we now take a few lines to recall a central result that we will use later on.

\begin{teo}\cite[Theorem 4]{IT}   \label{thm_psi}
Let $\Lambda$ be an Artin algebra and $0\rightarrow X\rightarrow Y\rightarrow Z\rightarrow 0$ be an exact sequence in $\modu\,(\Lambda)$ such that 
$\pd\,Z<\infty.$ Then $\pd\,Z\leq \Psi(X\oplus Y)+1.$ 
\end{teo}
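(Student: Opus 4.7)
The plan is to apply $\Omega$ to the given sequence enough times to reach the stability threshold $n:=\Phi(X\oplus Y)$, where by Fitting's Lemma the restriction of $L$ to $L^{n}\langle X\oplus Y\rangle$ is an isomorphism, and then to use this stability to bound the projective dimension of $\Omega^{n+1}Z$ by $\findim(\add\,\Omega^{n}(X\oplus Y))=\Psi(X\oplus Y)-n$. When $\pd\,Z\leq n+1$ the desired inequality is immediate, so I may assume $\pd\,Z\geq n+2$, in which case $\Omega^{n+1}Z$ is a non-projective module of positive finite projective dimension.

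First I would rewrite the given sequence with $Z$ on the left. Taking the pullback of the surjection $Y\twoheadrightarrow Z$ along a projective cover $P_{0}\twoheadrightarrow Z$ produces a module $E$ fitting into two short exact sequences; the one with $P_{0}$ on the right splits (because $P_{0}$ is projective), giving $E\simeq X\oplus P_{0}$, and so I obtain the exact sequence
$$0\longrightarrow \Omega Z\longrightarrow X\oplus P_{0}\longrightarrow Y\longrightarrow 0.$$
Iterating this construction (or equivalently applying the Horseshoe Lemma $n$ times to successive syzygies of the original sequence, absorbing the projective summands that appear at each step) produces
$$0\longrightarrow \Omega^{n+1}Z\longrightarrow \Omega^{n}X\oplus Q\longrightarrow \Omega^{n}Y\longrightarrow 0$$
with $Q$ projective. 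Both the middle and the right-hand terms belong to $\add\,\Omega^{n}(X\oplus Y)$ up to projective summands.

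The crucial step is to exploit the equality $n=\Phi(X\oplus Y)$ to promote $\Omega^{n+1}Z$ from being merely a kernel in the displayed sequence to being an element of $\add\,\Omega^{n}(X\oplus Y)$ modulo projectives. The idea is to decompose the middle and right terms via Krull--Schmidt and track how iso-classes of indecomposable summands transform under one more application of the syzygy operator $L$. The injectivity of $L$ on $L^{n}\langle X\oplus Y\rangle$, guaranteed by Fitting's Lemma past the threshold $n$, rules out any collapse of multiplicities in the relevant basis, and combined with the exactness of the displayed sequence this forces every non-projective indecomposable summand of $\Omega^{n+1}Z$ to already occur as a summand of $\Omega^{n}(X\oplus Y)$. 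This Fitting-style bookkeeping is the main obstacle: translating the abstract stability of $L$ into a concrete summand-wise membership statement for the kernel $\Omega^{n+1}Z$.

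Once $\Omega^{n+1}Z\in\add\,\Omega^{n}(X\oplus Y)$ is established, the proof concludes in one line. Since $\pd\,Z<\infty$ yields $\pd\,\Omega^{n+1}Z<\infty$, the definition of the relative finitistic dimension gives
$$\pd\,\Omega^{n+1}Z\;\leq\;\findim\bigl(\add\,\Omega^{n}(X\oplus Y)\bigr)\;=\;\Psi(X\oplus Y)-n,$$
and therefore $\pd\,Z\leq(n+1)+\pd\,\Omega^{n+1}Z\leq\Psi(X\oplus Y)+1$, as required.
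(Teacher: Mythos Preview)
The paper does not supply its own proof of this statement; it is quoted verbatim from \cite{IT} (their Theorem~4) and used as a black box throughout. So there is nothing in the present paper to compare your attempt against.

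That said, your plan reproduces the architecture of Igusa--Todorov's original argument: rotate the sequence to put a syzygy of $Z$ on the left, iterate the Horseshoe Lemma up to the stability index $n=\Phi(X\oplus Y)$, argue that $\Omega^{n+1}Z$ lands in $\add\,\Omega^{n}(X\oplus Y)$ up to projectives, and read off the bound from the definition of $\Psi$. You have also correctly located the one nontrivial step.

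A word of caution on that step. Your sentence ``combined with the exactness of the displayed sequence this forces every non-projective indecomposable summand of $\Omega^{n+1}Z$ to already occur as a summand of $\Omega^{n}(X\oplus Y)$'' is not yet an argument, and as written it is misleading. The group $K_{\p}$ carries only direct-sum relations, not short-exact-sequence relations, so the exactness of
\[
0\longrightarrow\Omega^{n+1}Z\longrightarrow\Omega^{n}X\oplus Q\longrightarrow\Omega^{n}Y\longrightarrow 0
\]
together with the injectivity of $L$ on $L^{n}\langle X\oplus Y\rangle$ does not by itself constrain the indecomposable summands of the kernel. The finiteness of $\pd\,Z$ has to enter the bookkeeping here, not merely in the final line: one iterates the Horseshoe construction further until the leftmost term becomes projective, at which point one does obtain a genuine direct-sum identity in $K_{\p}$ among high syzygies of $X$ and $Y$, and this identity is then pulled back through the isomorphisms $L:L^{m}\langle X\oplus Y\rangle\to L^{m+1}\langle X\oplus Y\rangle$ for $m\geq n$. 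Filling in that mechanism is exactly the content of \cite[Lemma~3 and Theorem~4]{IT}; once it is done, your concluding paragraph is correct.
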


Now, we start with the promised generalization of the Igusa-Todorov functions. In order to do that, we fix an Artin algebra $\Lambda.$ Let us look at the construction of the $\Phi$ function on $\modu\,(\Lambda)$ from a different and more general point of view. Denote by $\p=\p_\Lambda$ the class of all projective modules in $\modu\,(\Lambda).$  This class of modules is very particular since it is closed under direct sums and summands, that is $\p=\add\,\p=\add\, \Lambda.$

Denote by $K$ the free abelian group generated by the set of all the iso-classes $[M],$ for $M\in \modu\,(\Lambda),$ modulo the relation 
$[M]-[X]-[Y]$ if $M\simeq X\oplus Y.$ Note that $K$ is the free abelian group generated by the set $\ind(\Lambda)$ of all the iso-classes $[X]$ for all the indecomposable $X\in\modu\,(\Lambda).$ Observe that this abelian group $K$ is not the same as the abelian group $K$ considered by Igusa and Todorov in \cite{IT}, since the class $\p$ of all the projective modules in $\modu\,(\Lambda)$ vanishes in the original definition. From now on, for a subclass $\X\subseteq\modu\,(\Lambda)$ such that $\X=\add\,\X,$ we denote by $\langle\X\rangle$ the free abelian subgroup of $K$ generated by the set of all the iso-classes of indecomposable objects in $\X\cup \p.$ In the case of a single object $M\in\modu\,(\Lambda),$ we set $\langle M\rangle:=\langle\add(M)\rangle.$

Now, we fix a subclass $\D\subseteq\modu\,(\Lambda)$ such that $\D=\add\,\D$ and $\Omega\,\D\subseteq\D.$  Note that the first condition guarantees that $\langle\D\rangle$ is a direct summand of $K,$ and hence, the quotient group $K_\D:=K/\langle\D\rangle$ is  free abelian. In the case that $\D=\p,$  $K_\D$ coincides with the abelian group $K_\p$ considered by Igusa-Todorov in \cite{IT}. In this way, it is clear that both Igusa-Todorov functions $\Phi$ and $\Psi$ depend on the group generated by the class $\p$ of projectives. This raises the question of exactly how can be changed this class $\p$ of modules in order to define new Igusa-Todorov functions and what is the relation between them. In order to see a different solution of this question, by using relative homological algebra, we recommend the reader to see \cite{LM}. In what follows, we will address this issues.

The following result is a consequence of Fitting's Lemma, see  Lemma \ref{Fitting}.

\begin{lem}\label{LemaAux} Let $G$ be a  free abelian group,  $D$ be a subgroup of $G,$ $L\in\End_\Z(G)$ be such that $L(D)\subseteq D$ and let $k$  be a positive integer for which $L:L^k(D)\rightarrow D$ is a monomorphism. Then, for each finitely generated subgroup $X\subseteq G,$ we have  that
$$\eta_L(X)\leq \eta_{\overline{L}}(\overline{X})+k,$$
where $\overline{L}:G/D\rightarrow G/D,$ $g+D\mapsto L(g)+D,$ and $\overline{X}:=(X+D)/D.$
\end{lem}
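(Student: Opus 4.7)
\medskip

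\noindent\textbf{Proof plan.} Set $n:=\eta_{\overline{L}}(\overline{X})$, so that $\overline{L}\colon \overline{L}^{m}(\overline{X})\to \overline{L}^{m+1}(\overline{X})$ is an isomorphism for every $m\geq n$. The goal is to prove that $L\colon L^{m}(X)\to L^{m+1}(X)$ is an isomorphism for every $m\geq n+k$. This map is tautologically surjective, so I only need to establish injectivity. Fix $m\geq n+k$ and take $y\in L^{m}(X)$ with $L(y)=0$; I want $y=0$.

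\medskip

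The key maneuver is to split the $m$-th iterate into a ``$k$-part'' landing in $D$ and a ``$(m-k)$-part'' controlled by the quotient. Write $y=L^{m}(x)=L^{k}(z)$, where $z:=L^{m-k}(x)\in L^{m-k}(X)$. Passing to $G/D$, one has $\overline{z}\in \overline{L}^{m-k}(\overline{X})$ and $\overline{L}^{k+1}(\overline{z})=\overline{L^{k+1}(z)}=\overline{L(y)}=0$. Since $m-k\geq n$, successive application of the hypothesis shows that the composite
\[
\overline{L}^{k+1}\colon \overline{L}^{m-k}(\overline{X})\longrightarrow \overline{L}^{m+1}(\overline{X})
\]
is an isomorphism, and in particular injective, so $\overline{z}=0$, i.e.\ $z\in D$.

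\medskip

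Now apply the assumption on $D$. Since $z\in D$, one has $L^{k}(z)\in L^{k}(D)$, and the hypothesis that $L\colon L^{k}(D)\to D$ is a monomorphism, together with $L(L^{k}(z))=L^{k+1}(z)=L(y)=0$, forces $L^{k}(z)=0$. Hence $y=L^{k}(z)=0$, which proves that $L$ is injective on $L^{m}(X)$, and therefore that the restriction $L\colon L^{m}(X)\to L^{m+1}(X)$ is an isomorphism. By the definition of $\eta_L$ given in Lemma~\ref{Fitting}, this yields $\eta_{L}(X)\leq n+k=\eta_{\overline{L}}(\overline{X})+k$.

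\medskip

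The only delicate point I anticipate is the bookkeeping in step two: one must factor $y$ as $L^{k}$ applied to something in $L^{m-k}(X)$ so that the surplus $k$ iterations are available to send the element into $D$ via the quotient argument, and simultaneously keep the remaining single application of $L$ inside $L^{k}(D)$ where the monomorphism hypothesis applies. Once this split $m=k+(m-k)$ is set up correctly, the rest is a two-line diagram chase.
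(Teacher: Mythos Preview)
Your proof is correct and follows essentially the same approach as the paper's: both write $y=L^{k}(z)$ with $z=L^{m-k}(x)$, use that $\overline{L}^{k+1}$ is an isomorphism on $\overline{L}^{m-k}(\overline{X})$ (since $m-k\geq \eta_{\overline{L}}(\overline{X})$) to force $z\in D$, and then invoke the monomorphism hypothesis on $L^{k}(D)$ to conclude $y=0$. The only cosmetic difference is that the paper phrases this as a proof by contradiction (assuming $y\neq 0$ and deriving that $\overline{L}^{k+1}(\overline{z})\neq 0$), whereas you argue directly; the underlying logic is identical.
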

\begin{proof} Let $X$ be a finitely generated subgroup of $G.$ According to Fitting's Lemma, there exists the smallest positive integer 
$\eta_L(X)$ such that $L:L^{m}(X)\rightarrow L^{m+1}(X)$ is an isomorphism for all $m\geq \eta_L(X)$. The same thing can be done for the endomorphism $\overline{L}$ and the subgroup $\overline{X}.$ 
\

Let $m\geq \eta_{\overline{L}}(\overline{X})+k.$ We assert that $L:L^{m}(X)\rightarrow L^{m+1}(X)$ is an isomorphism. Indeed, suppose that $L:L^{m}(X)\rightarrow L^{m+1}(X)$ is not an isomorphism. Then, there must be a non zero element $y\in L^{m}(X)$ such that $L(y)=0.$ This element can be written as $y=L^m(x)=L^k(L^{m-k}(x))$, for some $x\in X$. Since $L:L^k(D)\rightarrow D$ is a monomorphism, it follows that $L^{m-k}(x)\not\in D.$ Thus $L^{m-k}(x)+D=\overline{L}^{m-k}(x+D)$ is not zero and $\overline{L}^{k+1}(\overline{L}^{m-k}(x+D))$ has to be also different from zero, since the map $\overline{L}:\overline{L}^d(\overline X)\rightarrow \overline{L}^{d+1}(\overline X)$ is an isomorphism, for any $d\geq n_{\overline{L}}(\overline{X}).$ Moreover, this implies that $\overline{L}^{k+1}:\overline{L}^d(\overline X)\rightarrow \overline{L}^{d+k+1}(\overline X)$ is also an isomorphism. By setting $d:=m-k,$ we get a contradiction since $\overline{L}^{k+1}(\overline{L}^{m-k}(x+D))=\overline{L}(y+D)=0.$
\end{proof}

By going back from the group context to that of the category of modules $\modu\,(\Lambda),$ for an Artin algebra $\Lambda,$ we give the following central concept.

\begin{defi}\label{defi_phitecho}
Let $\Lambda$ be an Artin algebra and $\D\subseteq\modu\,(\Lambda)$ be a class of $\Lambda$-modules satisfying that $\add\,\D=\D$ and 
$\Omega(\D)\subseteq \D.$ Let $L:K\rightarrow K$ be the endomorphism defined by $L([X])=[\Omega\,X],$ and 
$\overline{L}:K_\D\to K_\D,$ $[X]+\langle\D\rangle \mapsto L([X])+\langle\D\rangle.$ For any $X\in\modu\,(\Lambda),$ we set 
$$\Phi_{[\D]}(X):=\eta_{\overline L}(\overline{\langle X\rangle}),$$
where $\overline{\langle X\rangle}:=(\langle X\rangle+\langle\D\rangle)/\langle\D\rangle.$ Note that $\overline{\langle X\rangle}$
is  the free abelian group generated by the set of all the iso-classes of indecomposable non-projective $\Lambda$-modules of $\add\,X$ that do not lie in 
$\D.$
\end{defi}

As a remark, let us just point out that this definition is a generalisation of the Igusa-Todorov function $\Phi$, which can be obtained by taking 
$\mathcal{D}=\p.$  The next theorem is a consequence of the above definition and Lemma \ref{LemaAux}.

\begin{teo}\label{thm_comparison}
Let $\Lambda$ be an Artin algebra and $\D\subseteq\modu\,(\Lambda)$ be a class of $\Lambda$-modules satisfying that $\add\,\D=\D$ and 
$\Omega(\D)\subseteq \D.$  Then, $\Phi(X)\leq \Phi_{[\D]}(X)+\Phidim(\D),$ for every $X\in \modu\,(\Lambda).$
\end{teo}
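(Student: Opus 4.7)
The plan is to reduce the bound to a single application of Lemma~\ref{LemaAux}, choosing the ambient group, subgroup and endomorphism carefully. First, I would observe that the classical Igusa--Todorov function coincides with the instance $\Phi_{[\p]}$ of the new definition, so $\Phi(X)=\eta_L(\langle X\rangle_{K_\p})$ where $L$ is the syzygy operator on the quotient $K_\p:=K/\langle\p\rangle$. Because the convention makes $\langle\p\rangle\subseteq\langle\D\rangle$, the subgroup $\overline{D}:=\langle\D\rangle/\langle\p\rangle\subseteq K_\p$ is well defined and the canonical isomorphism $K_\p/\overline{D}\cong K_\D$ identifies the induced endomorphism on $K_\p/\overline{D}$ with the $\overline{L}$ of Definition~\ref{defi_phitecho}. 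The hypothesis $\Omega(\D)\subseteq\D$ gives $L(\overline{D})\subseteq\overline{D}$, so the setup of Lemma~\ref{LemaAux} is in place with $G=K_\p$ and $D=\overline{D}$.

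The substantive step is to exhibit an integer $k$ for which $L\colon L^k(\overline{D})\to\overline{D}$ is a monomorphism. If $\Phidim(\D)=\infty$ the target inequality is vacuous, so I assume it is finite and set $k:=\Phidim(\D)$. Since $\add\,\D=\D$, any finite collection of indecomposables in $\D$ appears as summands of a single module $M\in\D$ (their direct sum), and therefore every element of $\overline{D}$ lies in some $\langle M\rangle_{K_\p}$ with $M\in\D$; consequently $L^k(\overline{D})=\bigcup_{M\in\D}L^k(\langle M\rangle_{K_\p})$ as sets. By the definition of $\Phidim(\D)$, $\Phi(M)=\eta_L(\langle M\rangle_{K_\p})\leq k$ for each $M\in\D$, so $L$ is an isomorphism from $L^k(\langle M\rangle_{K_\p})$ onto $L^{k+1}(\langle M\rangle_{K_\p})$, and in particular injective. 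For any $y,y'\in L^k(\overline{D})$ with $L(y)=L(y')$, the difference $y-y'$ lies in $L^k(\langle M\rangle_{K_\p})$ for some $M\in\D$, and the injectivity in that piece forces $y=y'$, giving the required monomorphism.

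With this, Lemma~\ref{LemaAux} applied to the finitely generated subgroup $\langle X\rangle_{K_\p}\subseteq K_\p$ yields $\eta_L(\langle X\rangle_{K_\p})\leq\eta_{\overline{L}}\bigl((\langle X\rangle_{K_\p}+\overline{D})/\overline{D}\bigr)+k$. A quick third-isomorphism-theorem check (using $\langle\p\rangle\subseteq\langle\D\rangle$) identifies $(\langle X\rangle_{K_\p}+\overline{D})/\overline{D}$ with $\overline{\langle X\rangle}=(\langle X\rangle+\langle\D\rangle)/\langle\D\rangle\subseteq K_\D$, so the inequality becomes
$$\Phi(X)\leq\Phi_{[\D]}(X)+\Phidim(\D),$$
as desired. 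The main obstacle I anticipate is the injectivity verification in the middle paragraph, since $\overline{D}$ is generally not finitely generated and Lemma~\ref{LemaAux} is stated for finitely generated $X$; the trick of writing $\overline{D}$ as the directed union of the $\langle M\rangle_{K_\p}$ with $M\in\D$ (made possible by $\add\,\D=\D$) is what reduces matters back to the finitely generated Fitting situation that underlies the definition of $\Phidim(\D)$.
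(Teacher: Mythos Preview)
Your proof is correct and follows essentially the same strategy as the paper: apply Lemma~\ref{LemaAux} and verify its monomorphism hypothesis from the bound $\Phi(M)\leq k$ for all $M\in\D$. Your choice of ambient group $G=K_\p$ (rather than $K$, as the paper literally writes) together with the explicit directed-union argument for injectivity is a cleaner execution of the same idea, since it avoids the copy of $\langle\p\rangle$ sitting inside $\langle\D\rangle$; just note that Lemma~\ref{LemaAux} is stated for positive $k$, though its proof goes through verbatim for $k=0$.
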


\begin{proof} We can assume that $k:=\Phidim(\D)<\infty.$ In order to apply Lemma \ref{LemaAux},  there is an additional condition involving the endomorphism $L:K\rightarrow K$ and the group $D:=\langle \D\rangle$, namely $L:L^m(D)\rightarrow D$ is a monomorphism for some non negative integer $m$. As one can guess, we will take $m$ to be the integer $k;$  and the reason it works is that, if $L:L^k(D)\rightarrow D$ is not monic, then there has to be a module $X\in \D$ such that $\Phi(X)\geq k+1$, contradicting the fact that $\Phidim(\D)=k$. Now we have all the hypothesis needed to apply Lemma \ref{LemaAux}, and thus the result follows. 
\end{proof}

At this moment, we would like to give, through Example \ref{ex_D} and Lemma \ref{EjemploBasico}, some substantial examples of classes of 
$\Lambda$-modules that could be taken as $\D$. It is worth mentioning that, in Definition \ref{defi_phitecho}, it is not required that the $\Phi$-dimension of $\D$ is finite, but the most important results in this work depend on this condition.

\begin{ex}\label{ex_D} Let $\Lambda$ be an Artin algebra.
\begin{itemize}
\item[(1)] The extremal cases are obtained by taking the classes $\p$ and $\modu\,(\Lambda)$. In the first case $\Phi_{[\p]}=\Phi$ and in the second one $\Phi_{[\modu\,(\Lambda)]}=0.$

\item[(2)] Let $T=T_\Lambda$ be a tilting module in $\modu\,(\Lambda^{op})$ such that $\pd(T)\leq 1$  and let $\Gamma:=\End(T_\Lambda).$ We have in
$\modu\,(\Gamma^{op})$ the torsion free class $\mathcal{Y}(T):=\{Y\in\modu\,(\Gamma^{op})\;:\; \mathrm{Tor}^{\Gamma^{op}}_1(Y,T)=0\},$ which contains the projectives and it is closed under submodules. This implies that  $\D:=\mathcal{Y}(T)$ is closed under syzygies, direct sums and direct summands. We only need to see when the $\Phi$-dimension of $\mathcal{Y}(T)$ is finite, but the most interesting thing happens here: it follows, from a more general result proved in \cite{FLM}, that $\Phidim(\Lambda)<\infty\; \Leftrightarrow\; \Phidim(\Gamma)<\infty$. Now, it is easy to conclude that the last one is equivalent to $\Phidim(\mathcal{Y}(T))<\infty.$

\item[(3)] Let $\D:=\p^{<\infty}$ be the subclass of $\modu\,(\Lambda)$ consisting of all the $\Lambda$-modules with finite projective dimension. It is clear that $\D$ satisfies the conditions imposed in Definition \ref{defi_phitecho}. In this case, the finiteness of $\Phidim(\D)$ is equivalent to the finiteness of the finitistic dimension of $\Lambda.$ We can also take $\D:=\p^{\leq n}$ to be the subclass of $\modu\,(\Lambda)$ consisting of all the $\Lambda$-modules with projective dimension at most $n;$ and here we get $\Phidim(\D)\leq n.$   
\end{itemize}
\end{ex}

The following two classes are our prototype for the class $\D\subseteq\modu\,(\Lambda)$ given in Definition \ref{defi_phitecho}. Moreover, they satisfy the additional condition that $\Phidim(\D)=0$. In order to do that, we recall the notion of Gorenstein projective object in $\modu\,(\Lambda),$ for some Artin algebra $\Lambda.$ 
\

An object $M\in\modu\,(\Lambda)$ is {\bf Gorenstein projective} if there is an acyclic complex 
$\mathbf{P}_\bullet=(P_m, d^{\mathbf{P}}_m:P_m\to P_{m-1})_{m\in\mathbb{Z}}$ of objects in $\p_\Lambda$
such that $M=\Ker\,d^{\mathbf{P}}_0$ and $\Hom_\Lambda(\mathbf{P}_\bullet,Q)$ is an acyclic complex for any $Q\in\p_\Lambda.$ The class of all the Gorenstein projective objects in $\modu\,(\Lambda)$ is denoted by $\Gproj(\Lambda).$

\begin{lem}\label{EjemploBasico} Let $\Lambda$ be an Artin algebra.  Then,  for $\D:=\Gproj(\Lambda)$ or $\D:={}^\perp\Lambda,$ it follows that  $\Phidim\,(\D)=0$ and $\D$ is left saturated. In particular, $\add\,\D=\D$ and $\Omega(\D)\subseteq \D.$
\end{lem}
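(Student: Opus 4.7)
The lemma asserts two things for each of $\D=\Gproj(\Lambda)$ and $\D={}^\perp\Lambda$: left-saturation and $\Phidim(\D)=0$. Left-saturation is routine in both cases: for $\D=\Gproj(\Lambda)$ it is the classical Auslander--Bridger fact that the class is resolving and closed under direct summands, and for $\D={}^\perp\Lambda$ it was recorded in Section~2 as a general property of left-perpendicular classes. In either case $\add\,\D=\D$, and applying closure under kernels of epimorphisms to the sequence $0\to\Omega M\to P\to M\to 0$ with $M\in\D$ gives $\Omega(\D)\subseteq\D$.

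For $\Phidim(\D)=0$, I would fix $X\in\D$ and write its Krull--Schmidt decomposition as $X\simeq\bigoplus_{i=1}^{s}M_i^{c_i}\oplus P$, where $P$ is projective and $M_1,\ldots,M_s$ are the pairwise non-isomorphic non-projective indecomposable summands. Then $\overline{\langle X\rangle}$ is a free abelian subgroup of $K_\p$ of rank $s$ with basis $[M_1],\ldots,[M_s]$. Unpacking the definition, $\Phi(X)=0$ is equivalent to $\overline L\colon\overline L^m(\overline{\langle X\rangle})\to\overline L^{m+1}(\overline{\langle X\rangle})$ being an isomorphism for every $m\ge 0$; surjectivity onto the image is automatic, so only injectivity is at stake. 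Since $\D$ is closed under $\Omega$ and under direct summands, each $\overline L^m(\overline{\langle X\rangle})$ sits inside the subgroup of $K_\p$ generated by iso-classes of non-projective indecomposables living in $\D$. The whole statement therefore reduces to the following \emph{key claim}: for any pairwise non-isomorphic non-projective indecomposables $N_1,\ldots,N_r\in\D$, the classes $[\Omega N_1],\ldots,[\Omega N_r]$ are linearly independent in $K_\p$.

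For $\D=\Gproj(\Lambda)$ the key claim is immediate: the stable category $\underline{\Gproj}(\Lambda)$ is triangulated with shift functor $\Omega^{-1}$ (Buchweitz/Happel), so $\Omega$ is an autoequivalence and permutes iso-classes of indecomposables modulo projective summands; the images $[\Omega N_i]$ are therefore distinct basis elements of $K_\p$. For $\D={}^\perp\Lambda$ the stable category is not triangulated, so I would substitute the Auslander--Bridger transpose $\mathrm{Tr}$ for the missing autoequivalence. Since $\Ext^{\ge 1}_\Lambda(N,\Lambda)=0$ for $N\in{}^\perp\Lambda$, the functor $(-)^{*}:=\Hom_\Lambda(-,\Lambda)$ is exact on a minimal projective presentation of $N$, which yields $\Omega\,\mathrm{Tr}(N)\simeq(\Omega N)^{*}$ modulo projective summands in $\modu\,(\Lambda^{\mathrm{op}})$. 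A hypothetical non-trivial relation $\Omega A\oplus P'\simeq\Omega B\oplus Q'$ (with $A,B$ the positive/negative disjoint combinations of the $N_i$'s and $P',Q'$ projective) would, after dualising and invoking Schanuel's lemma on the two induced sequences with common right-hand term, force $A^{*}\simeq B^{*}$ modulo projective summands in $\modu\,(\Lambda^{\mathrm{op}})$. Applying $(-)^{*}$ once more and using the involutivity of the transpose modulo projective summands would then return $A\simeq B$ modulo projective summands, contradicting the fact that $A$ and $B$ share no indecomposable summand.

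The main obstacle is the case $\D={}^\perp\Lambda$: the Gorenstein-projective case is essentially a formal consequence of Buchweitz's triangulated structure on $\underline{\Gproj}(\Lambda)$, whereas in its absence one has to rely on the transpose/Schanuel bookkeeping and carefully track projective summands at each step (dualising, applying Schanuel's lemma, invoking involutivity of $\mathrm{Tr}$) so that the linear-independence of the $[\Omega N_i]$ really does survive. The potential subtlety that $M\to M^{**}$ need not be an isomorphism for a general $M\in{}^\perp\Lambda$ is exactly what forces the indirect passage through $\mathrm{Tr}$ rather than a direct reflexivity argument.
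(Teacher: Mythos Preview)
Your route differs markedly from the paper's: the paper simply cites \cite[Corollary~4.2]{LM} for $\Phidim({}^\perp\Lambda)=0$ and then uses the inclusion $\Gproj(\Lambda)\subseteq{}^\perp\Lambda$ (from \cite{Holm}) to get $\Phidim(\Gproj(\Lambda))=0$ as a byproduct---so it treats ${}^\perp\Lambda$ first and never needs the triangulated structure on $\underline{\Gproj}(\Lambda)$. Your reduction to the ``key claim'' and your argument for $\Gproj(\Lambda)$ via the autoequivalence $\Omega$ are both fine. The ${}^\perp\Lambda$ case, however, has a genuine gap at your step~(3): from $A^{*}\simeq B^{*}$ modulo projectives you want to ``apply $(-)^{*}$ once more and use the involutivity of the transpose'', but $(-)^{*}\circ(-)^{*}$ returns $A^{**}\simeq B^{**}$, not anything governed by $\mathrm{Tr}\,\mathrm{Tr}$, and concluding $A\simeq B$ would require reflexivity. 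Modules in ${}^\perp\Lambda$ need \emph{not} be reflexive---Ringel and Zhang showed that the three conditions $M\in{}^\perp\Lambda$, $M^{*}\in{}^\perp(\Lambda^{\mathrm{op}})$, and $M$ reflexive (whose conjunction characterises $\Gproj(\Lambda)$) are mutually independent---so this step fails as written, and your own caveat about $M\to M^{**}$ was on the mark.

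There is a clean repair that bypasses the dual entirely and uses only $\Ext^1_\Lambda(-,\Lambda)=0$: given $A,B\in{}^{\perp_1}\Lambda$ with $\Omega A\oplus P'\simeq\Omega B\oplus Q'=:K$, form the pushout $E$ of the two monomorphisms $K\hookrightarrow P_A\oplus P'$ and $K\hookrightarrow P_B\oplus Q'$ coming from the projective covers. One obtains short exact sequences $0\to P_A\oplus P'\to E\to B\to 0$ and $0\to P_B\oplus Q'\to E\to A\to 0$; both split since $\Ext^1_\Lambda(A,\Lambda)=\Ext^1_\Lambda(B,\Lambda)=0$, whence $A\oplus(\text{proj})\simeq B\oplus(\text{proj})$. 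This is exactly your key claim, and is essentially the argument behind the result the paper cites from \cite{LM}.
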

\begin{proof}  By \cite[Theorem 2.5]{Holm} and \cite[Remark 7.4 (a)]{LM}, it follows that the class $\Gproj(\Lambda)$ is left saturated. Moreover, \cite[Proposition 2.3]{Holm} implies that $\Gproj(\Lambda)\subseteq {}^\perp\Lambda.$ On the other hand, by using \cite[Corollary 4.2]{LM}, we get that $\Phidim\,({}^{\perp}\Lambda)=0;$  and thus, $\Phidim\,(\Gproj(\Lambda))=0.$ Finally, it is  well known that the class ${}^\perp\Lambda$  is left saturated.
\end{proof}

At this point, we would like to formulate in this context some fundamental definitions and properties that arise naturally from the developed theory. First thing that one can think of is the generalisation of the second Igusa-Todorov function $\Psi,$ and then to check out if all the known properties hold true with the obvious modifications.

For any subclass $\X\subseteq\modu\,(\Lambda),$ the finitistic dimension of $\X$ is $\findim(\X):=\sup\,\{\pd\,X\;:\; X\in \X\;\text{and}\;\pd\,X<\infty\}.$ For any $M,N\in\modu\,(\Lambda),$ we write $M|N$ if $M$ is a direct summand of $N.$

\begin{defi}\label{defi_psitecho}
Let $\Lambda$ be an Artin algebra and $\D\subseteq\modu\,(\Lambda)$ be a class of $\Lambda$-modules satisfying that $\add\,\D=\D$ and 
$\Omega(\D)\subseteq \D.$ For any $X\in\modu\,(\Lambda),$ we set
$$\Psi_{[\D]}(X):=\Phi_{[\D]}(X) + \findim\,(\{Z\in \modu\,(\Lambda)\;:\; Z|
\Omega^{\Phi_{[\D]}(X)}(X)\}).$$
\end{defi}
Note that $$\findim\,(\{Z\in \modu\,(\Lambda)\;:\; Z|\Omega^{\Phi_{[\D]}(X)}(X)\})=\findim\,(\add\,\Omega^{\Phi_{[\D]}(X)}(X)).$$

The next propositions are a compendium of fundamental properties satisfied by these generalised Igusa-Todorov functions. We recall, from (\ref{gamma}) in the introduction, the definitions of $\Phi_{[\D]}\mathrm{dim}(\X)$ and $\Psi_{[\D]}\mathrm{dim}(\X)$, for $\X\subseteq \modu\,(\Lambda).$

\begin{pro}\label{prop_propiedades}
Let $\Lambda$ be an Artin algebra and $\D\subseteq\modu\,(\Lambda)$ be a class of $\Lambda$-modules satisfying that $\add\,\D=\D$ and 
$\Omega(\D)\subseteq \D.$ Then,  the following statements hold true, for $X,Y,M\in \modu\,(\Lambda).$ 
\begin{itemize}
\item[(a)] If $M\in \D\cup \p$, then $\Phi_{[\D]}(M)=0$ and $\Phi_{[\D]}(X\oplus M)=\Phi_{[\D]}(X).$
\item[(b)] $\Phi_{[\D]}(X)\leq \Phi_{[\D]}(X\oplus Y).$
\item[(c)] $\Phi_{[\D]}\mathrm{dim}(\add\,X)=\Phi_{[\D]}(X).$
\item[(d)] If $\pd\,X<\infty$ and $\Phidim(\D)=0$, then $\Phi_{[\D]}(X)=\Phi(X)=\pd\,X.$
\item[(e)] Let $Z|\Omega^t(X)$ be such that $0\leq t\leq \Phi_{[\D]}(X)$ and $\pd\,Z<\infty.$ Then\\ $\pd\,Z+t\leq \Psi_{[\D]}(X).$
\item[(f)] $\Psi_{[\D]}(X)\leq \Psi_{[\D]}(X\oplus Y).$
\item[(g)] $\Psi_{[\D]}\mathrm{dim}(\add\,X)=\Psi_{[\D]}(X).$ 
\end{itemize}
\end{pro}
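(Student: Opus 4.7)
The strategy is to handle the seven items in an order that exposes the key technical lemma (item (e)) once and then lets the remaining items fall out by monotonicity.

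\textbf{Items (a)--(c): direct from the construction.} For (a), note that since $\add\,\D=\D$, every indecomposable summand of $M\in\D\cup\p$ is either projective or lies in $\D$, so its class lies among the generators of $\langle\D\rangle$. Hence $\langle M\rangle\subseteq\langle\D\rangle$, so $\overline{\langle M\rangle}=0$ in $K_\D$, whence $\Phi_{[\D]}(M)=\eta_{\overline L}(0)=0$. For the second assertion of (a), $\overline{\langle X\oplus M\rangle}=\overline{\langle X\rangle+\langle M\rangle}=\overline{\langle X\rangle}$ because $\langle M\rangle\subseteq\langle\D\rangle$, and both sides of the equality $\Phi_{[\D]}(X\oplus M)=\Phi_{[\D]}(X)$ compute $\eta_{\overline L}$ on the same subgroup. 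For (b), the inclusion $\overline{\langle X\rangle}\subseteq\overline{\langle X\oplus Y\rangle}$ together with the last clause of Lemma \ref{Fitting} gives the inequality. For (c), any $Z\in\add\,X$ satisfies $Z\mid X^n$ for some $n$, so $\langle Z\rangle\subseteq\langle X^n\rangle=\langle X\rangle$; by Fitting's monotonicity $\Phi_{[\D]}(Z)\leq\Phi_{[\D]}(X)$, and equality in the sup is attained at $X$ itself.

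\textbf{Item (d): sandwich $\Phi_{[\D]}(X)$ between $\Phi(X)$ and $\pd\,X$.} It is classical that $\Phi(X)=\pd\,X$ whenever $\pd\,X<\infty$ (\cite{IT}). The hypothesis $\Phidim(\D)=0$ plugged into Theorem \ref{thm_comparison} yields $\Phi(X)\leq\Phi_{[\D]}(X)$. For the reverse direction, write $n:=\pd\,X$; then $\Omega^n X$ is projective, so $\overline L^{\,n}(\overline{\langle X\rangle})$ is represented by classes lying in $\langle\p\rangle\subseteq\langle\D\rangle$, i.e.\ $\overline L^{\,n}(\overline{\langle X\rangle})=0$. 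Hence the iterated map is trivially an isomorphism from stage $n$ onward, so $\Phi_{[\D]}(X)\leq n=\pd\,X$.

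\textbf{Item (e): the technical core.} Put $n:=\Phi_{[\D]}(X)$ and assume $Z\mid\Omega^t(X)$ with $0\leq t\leq n$ and $\pd\,Z<\infty$. The plan is to reduce to the case $t=n$ (which is immediate from the definition of $\Psi_{[\D]}$). Since $Z\mid\Omega^t X$, by taking $(n-t)$-th syzygies one obtains $\Omega^{n-t}Z\mid\Omega^{n}X$ up to projective summands; equivalently, $[\Omega^{n-t}Z]$ appears as a summand of $[\Omega^{n}X]$ inside $K$ modulo $\langle\p\rangle$. Set $p:=\pd\,Z$. If $p\leq n-t$, then $\Omega^{n-t}Z$ is projective, and $\pd\,Z+t\leq (n-t)+t=n\leq\Psi_{[\D]}(X)$. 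Otherwise $\pd(\Omega^{n-t}Z)=p-(n-t)\geq 0$ is finite, and $\Omega^{n-t}Z$ is (up to projective summands, which do not affect $\add$) a summand of $\Omega^{n}X$; hence $p-(n-t)\leq\findim(\add\,\Omega^n X)=\Psi_{[\D]}(X)-n$, rearranging to $\pd\,Z+t=p+t\leq\Psi_{[\D]}(X)$. This is the step I expect to be the main obstacle, because one has to track projective summands appearing when passing from a direct-sum decomposition of $\Omega^t X$ to one of $\Omega^{n-t}(\Omega^t X)$, using that $\add$ is insensitive to adding projectives while $\pd$ drops by exactly the difference.

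\textbf{Items (f) and (g): corollaries of (e).} For (f), let $n:=\Phi_{[\D]}(X)$. For every $Z\mid\Omega^n X$ with $\pd\,Z<\infty$, we have $Z\mid\Omega^n(X\oplus Y)$, so applying (e) to $X\oplus Y$ at depth $t=n\leq\Phi_{[\D]}(X\oplus Y)$ (the latter inequality by (b)) gives $\pd\,Z+n\leq\Psi_{[\D]}(X\oplus Y)$. Taking the supremum over such $Z$ yields $\Psi_{[\D]}(X)=n+\findim(\add\,\Omega^n X)\leq\Psi_{[\D]}(X\oplus Y)$. Finally (g) is the same argument as in (c): for $Z\in\add\,X$ we have $Z\mid X^k$ for some $k$, so $\Psi_{[\D]}(Z)\leq\Psi_{[\D]}(X^k)=\Psi_{[\D]}(X)$ by (f) and the fact that $\add\,Z^k=\add\,Z$ gives $\Omega^{\Phi_{[\D]}(X^k)}X^k$ with the same $\add$ as $\Omega^{\Phi_{[\D]}(X)}X$; equality in the sup is attained at $X$ itself.
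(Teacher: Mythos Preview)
Your proposal is correct and follows essentially the same route as the paper: items (a)--(c) come straight from the definition and Fitting monotonicity, item (e) is the key estimate obtained by pushing $Z$ forward $\Phi_{[\D]}(X)-t$ syzygies, and (f)--(g) are deduced from (e) together with (b). The only minor deviation is in (d), where you sandwich $\Phi_{[\D]}(X)$ between $\Phi(X)$ and $\pd\,X$ via Theorem~\ref{thm_comparison} and the classical identity $\Phi(X)=\pd\,X$, while the paper argues directly that $\overline{\langle\Omega^{n-1}X\rangle}\neq 0$ using that $\Phidim(\D)=0$ forces $\D$ to contain no non-projective module of finite projective dimension; both arguments are equally short and your version has the small advantage of reusing the already-proved comparison theorem.
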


\begin{proof}  (a) It follows immediately from Definition \ref{defi_phitecho}.
\

 (b)  We must notice firstly that $\overline{\langle X\rangle}$ is a subgroup of $\overline{\langle X\oplus  Y\rangle}.$ Hence we get the inequality from Fitting's Lemma. 
 \
 
 (c) Let $Z\in\add\,X.$ Then, there exists   $Z'\in\modu\,(\Lambda)$  such that $Z\oplus Z'=X^s,$ for some non-negative integer $s.$ From Definition \ref{defi_phitecho}, we get that $\Phi_{[\D]}(X^s)=\Phi_{[\D]}(X);$ and if we apply (b),  we  obtain that 
  $\Phi_{[\D]}(Z)\leq \Phi_{[\D]}(X).$
 \ 
  
(d) Note firstly that, since $\Phidim(\D)=0,$ the class $\D$ does not contain $\Lambda$-modules of 
 finite projective dimension that are not projective. Now, suppose $\pd\,X=n.$ Then 
 $\langle \Omega^n X\rangle=\overline{\langle \Omega^n X\rangle}=0,$ while $\langle \Omega^{n-1} X\rangle$ and $\overline{\langle \Omega^{n-1} X\rangle}$ are both non zero, proving that $\Phi_{[\D]}(X)=\Phi(X)=n.$ 
 \
 
 (e) We point out firstly, that $\Omega^{\Phi_{[\D]}(X)-t}(Z)|\Omega^{\Phi_{[\D]}(X)}(X);$ and that follows, since the syzygy operator commutes with direct sums and $\pd\,\Omega^{\Phi_{[\D]}(X)-t}(Z)$ is finite due to the finiteness of $\pd\,Z.$  Moreover, we have that 
 $$\pd\,Z\leq \pd\,\Omega^{\Phi_{[\D]}(X)-t}(Z) + \Phi_{[\D]}(X)-t.$$ 
 On the other hand, note that $\pd\,\Omega^{\Phi_{[\D]}(X)-t}(Z)\leq \findim\,(\add\,\Omega^{\Phi_{[\D]}(X)}(X)).$ Then, by combining the above inequalities, we get $$\pd\,Z+t\leq \findim\, (\add\,\Omega^{\Phi_{[\D]}(X)}(X)) + \Phi_{[\D]}(X) = \Psi_{[\D]}(X).$$
 
 (f) Let $Z|\Omega^{\Phi_{[\D]}(X)}(X)$ with $\pd\, Z$  finite. From  (b) we know that $\Phi_{[\D]}(X)\leq \Phi_{[\D]}(X\oplus Y)$ and thus by (e), since $Z$ is also a direct summand of $\Omega^{\Phi_{[\D]}(X)}(X\oplus Y)$, we get  that
 $$\pd\,Z+\Phi_{[\D]}(X)\leq \Psi_{[\D]}(X\oplus Y).$$ 
 Finally, by taking the supremum  in the left side of the preceding inequality, we get the desired inequality in (f).
 \
 
 (g) Let $Z\in\add\, X.$ Then, there exists   $Z'\in\modu\,(\Lambda)$  such that $Z\oplus Z'=X^s,$ for some non-negative integer $s.$ Note that $\Phi_{[\D]}(X^s)=\Phi_{[\D]}(X)$ as can be seen in the proof of (c). Then,  we get the equalities
\begin{equation*}
\begin{split}
\Psi_{[\D]}(X^s) & =\Phi_{[\D]}(X^s)+\findim\,(\add\,\Omega^{\Phi_{[\D]}(X^s)}(X^s))\\
                       &=\Phi_{[\D]}(X)+\findim\,(\add\,\Omega^{\Phi_{[\D]}(X)}(X^s))\\
                       &=\Phi_{[\D]}(X)+\findim\,(\add\,\Omega^{\Phi_{[\D]}(X)}(X))\\
                       &=\Psi_{[\D]}(X).
\end{split}
\end{equation*} 
 Therefore, by (f), it follows that $\Psi_{[\D]}(Z)\leq \Psi_{[\D]}(X^s)=\Psi_{[\D]}(X),$ proving (g).
\end{proof}

For the next proposition we require the additional condition $\Phidim(\D)=0.$

\begin{pro}\label{previoLIT}  Let $\Lambda$ be an Artin algebra and $\D\subseteq\modu\,(\Lambda)$ be such that $\add\,\D=\D,$  
$\Omega(\D)\subseteq\D$ and $\Phidim(\D)=0.$ Then, the following statements hold true.
\begin{itemize}
\item[(a)] $\Psi(X)\leq\Psi_{[\D]}(X),$ for any $X\in\modu\,(\Lambda).$
\item[(b)] $\Psi_{[\D]}(X\oplus D)=\Psi_{[\D]}(X),$ for any $X\in\modu\,(\Lambda)$ and $D\in\D.$
\item[(c)] $\Psi_{[\D]}\mathrm{dim}(\D)=0.$
\end{itemize}
\end{pro}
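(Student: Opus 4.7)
The whole proposition hinges on one preliminary observation that the hypothesis $\Phidim(\D)=0$ forces: every $D\in\D$ with $\pd\,D<\infty$ is in fact projective. This follows by applying both Proposition \ref{prop_propiedades}(a) and Proposition \ref{prop_propiedades}(d) to $D$, yielding $0=\Phi_{[\D]}(D)=\pd\,D.$ I would establish this one-line lemma first and then feed it into each of the three parts.

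For (a), the plan is to combine Theorem \ref{thm_comparison} with Proposition \ref{prop_propiedades}(e). With $\Phidim(\D)=0,$ Theorem \ref{thm_comparison} collapses to $\Phi(X)\leq \Phi_{[\D]}(X),$ so the integer $t:=\Phi(X)$ lies in the interval where Proposition \ref{prop_propiedades}(e) applies. This gives $\pd\,Z+\Phi(X)\leq \Psi_{[\D]}(X)$ for every $Z\mid\Omega^{\Phi(X)}(X)$ of finite projective dimension; taking the supremum on the left-hand side reconstructs $\findim(\add\,\Omega^{\Phi(X)}(X))+\Phi(X)=\Psi(X),$ yielding the desired inequality.

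For (b), write $N:=\Phi_{[\D]}(X).$ Since $D\in\D$ represents zero in $K_\D,$ one has $\overline{\langle X\oplus D\rangle}=\overline{\langle X\rangle},$ hence $\Phi_{[\D]}(X\oplus D)=N.$ Now $\Omega^N(X\oplus D)=\Omega^N(X)\oplus \Omega^N(D),$ and because $\Omega(\D)\subseteq \D$ together with $\add\,\D=\D$ forces every indecomposable summand of $\Omega^N(D)$ to lie in $\D,$ the preliminary observation makes those of finite projective dimension actually projective; they contribute $0$ to the finitistic dimension, so $\findim(\add\,\Omega^N(X\oplus D))=\findim(\add\,\Omega^N(X)),$ giving (b). Part (c) is immediate: for $D\in\D,$ Proposition \ref{prop_propiedades}(a) gives $\Phi_{[\D]}(D)=0,$ so $\Psi_{[\D]}(D)=\findim(\add\,D),$ and the preliminary observation applied to the elements of $\add\,D\subseteq\D$ forces the latter to vanish.

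The only real obstacle is the preliminary lemma itself, because it is the single lever that turns the relative hypothesis $\Phidim(\D)=0$ into usable absolute information about projective dimensions. Once that interaction is pinned down, the three parts reduce to bookkeeping around Proposition \ref{prop_propiedades} and Theorem \ref{thm_comparison}.
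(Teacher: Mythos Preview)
Your proposal is correct and follows essentially the same path as the paper's proof: both arguments use Theorem \ref{thm_comparison} with $\Phidim(\D)=0$ to get $\Phi(X)\leq\Phi_{[\D]}(X)$ and then invoke Proposition \ref{prop_propiedades}(e) for part (a), and both rely on Proposition \ref{prop_propiedades}(a) together with the observation that modules in $\D$ of finite projective dimension are projective for parts (b) and (c). Your ``preliminary lemma'' is exactly the content of the first sentence in the paper's proof of Proposition \ref{prop_propiedades}(d), which the paper simply cites rather than restates.
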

\begin{proof} (a) Let $X\in\modu\,(\Lambda).$ Consider some $Z|\Omega^{\Phi(X)}(X)$ with $\pd\,Z<\infty.$ Since $\Phidim(\D)=0,$ we get from Theorem \ref{thm_comparison} that $0\leq \Phi(X)\leq\Phi_{[\D]}(X).$ Thus, by Proposition \ref{prop_propiedades} (e), it 
follows that $\pd\,Z+\Phi(X)\leq\Psi_{[\D]}(X).$ Hence $\Psi(X)\leq\Psi_{[\D]}(X).$
\

(b) Let $X\in\modu\,(\Lambda)$ and $D\in\D.$ Using Proposition \ref{prop_propiedades} (d), the fact that $\Phidim(\D)=0$ and 
$\Omega(\D)\subseteq\D,$ we get that 
$$\findim\,(\add\,\Omega^{\Phi_{[\D]}(X)}(X\oplus D))=\findim\,(\add\,\Omega^{\Phi_{[\D]}(X)}(X)).$$
On the other hand, Proposition \ref{prop_propiedades} (a) gives us $\Phi_{[\D]}(X\oplus D)=\Phi_{[\D]}(X).$ Then
\begin{equation*}
\begin{split}
\Psi_{[\D]}(X\oplus D) & =\Phi_{[\D]}(X\oplus D)+\findim\,(\add\,\Omega^{\Phi_{[\D]}(X\oplus D)}(X\oplus D))\\
                       &=\Phi_{[\D]}(X)+\findim\,(\add\,\Omega^{\Phi_{[\D]}(X)}(X\oplus D))\\
                       &=\Phi_{[\D]}(X)+\findim\,(\add\,\Omega^{\Phi_{[\D]}(X)}(X))\\
                       &=\Psi_{[\D]}(X),
\end{split}
\end{equation*} 
proving (b).
\

(c) Let $D\in\D.$ From Proposition \ref{prop_propiedades} (a), we get that $\Phi_{[\D]}(D)=0.$ On the other hand, by Proposition \ref{prop_propiedades} (d), we conclude that $\findim\,(\add\,D)=0.$ Finally, we have
$\Psi_{[\D]}( D)=\Phi_{[\D]}(D)+\findim\,(\add\,\Omega^{\Phi_{[\D]}(D)}(D))=\findim\,(\add\,D)=0.$      
\end{proof}

\begin{remark} \label{contraejemplo}
\begin{itemize}
\item[(a)] We would like to point out that, in contrast with Theorem \ref{thm_comparison}, the additional condition $\Phidim(\D)=0$ is necessary in the sense that the inequality $\Psi(X)\leq\Psi_{[\D]}(X)+\Psidim(\D)$ does not hold in general, as can be shown in the next example. Consider the quotient path $k$-algebra $\Lambda:=kQ/J^2$ given by the following quiver $Q$ 
$$\begin{tikzcd}
    Q: & 6 & 5\ar{l} & 4\ar{l} & 1\ar[loop, out=120, in=57, distance=2em]{}\ar{r}\ar{l} & 2\ar{r} & 3,
\end{tikzcd}$$
where $J$ is the ideal of $kQ$ generated by all the arrows of $Q.$

Let $\D := \add(\Lambda\oplus S_2)$ and  $X:=S_1\oplus S_2\in\modu\,(\Lambda).$ By direct computation we obtain $\Psi(X)=3$, while $\Psi_{[\D]}(X)=1$ and $\Psidim(\D)=1.$ Thus, the inequality $\Psi(X)\leq\Psi_{[\D]}(X)+\Psidim(\D)$ does not hold. The reason lies on the difficulty in controlling the projective dimension of the direct summands of syzygies of $\Lambda$-modules with infinite projective dimension.

\item[(b)] One more thing we can say about the relationship between $\Psi$ and $\Psi_{[\D]}$, apart from Proposition \ref{previoLIT}, is the following: for $X\in \mod\,(\Lambda),$ we have that $\Phi(X)\leq \Phi_{[\D]}(X)$ if and only if $\Psi(X)\leq \Psi_{[\D]}(X).$ The proof of this claim follows directly from the definition, since the fact that $\Phi(X)\leq \Phi_{[\D]}(X)$ implies that $$\findim\,(\add\,\Omega^{\Phi(X)}(X))-\findim\,(\add\,\Omega^{\Phi_{[\D]}(X)}(X))\leq \Phi_{[\D]}(X)-\Phi(X).$$
\end{itemize}
\end{remark}

The next two properties are motivated by their analogues in \cite{HLM1}.

\begin{pro}\label{previoLIT2}
Let $\Lambda$ be an Artin algebra and $\D\subseteq\modu\,(\Lambda)$ be such that $\add\,\D=\D$ and $\Omega(\D)\subseteq\D.$ Then,  for any $X\in \modu\,(\Lambda),$ we have that:
\begin{itemize}
\item[(a)] $\Phi_{[\D]}(X)\leq \Phi_{[\D]}(\Omega X)+1;$
\item[(b)] $\Psi_{[\D]}(X)\leq \Psi_{[\D]}(\Omega X)+1.$
\end{itemize}
\end{pro}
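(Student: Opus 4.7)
The plan is to establish (a) first and then deduce (b) from it together with Proposition \ref{prop_propiedades} (e). For (a), the crucial structural observation is that the induced syzygy operator satisfies $\overline{L}(\overline{\langle X\rangle})\subseteq \overline{\langle \Omega X\rangle}$: every indecomposable summand $M$ of $X$ has $\Omega M$ isomorphic to a direct sum of indecomposable summands of $\Omega X$, and projective generators of $\langle X\rangle$ are killed by $L$, so $L(\langle X\rangle)\subseteq \langle \Omega X\rangle$, and this descends to the quotient by $\langle \D\rangle$. With this inclusion in hand, I combine two elementary facts about the Fitting invariant: the monotonicity statement in Lemma \ref{Fitting} gives $\eta_{\overline{L}}(\overline{L}(\overline{\langle X\rangle}))\leq \eta_{\overline{L}}(\overline{\langle \Omega X\rangle})$, while the identity $\overline{L}^{m}(\overline{L}(A))=\overline{L}^{m+1}(A)$ yields $\eta_{\overline{L}}(A)\leq \eta_{\overline{L}}(\overline{L}(A))+1$ directly from the definition of $\eta$. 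Chaining these two inequalities produces (a).

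For (b), I would split on $n:=\Phi_{[\D]}(X)$. If $n\geq 1$, part (a) gives $n-1\leq \Phi_{[\D]}(\Omega X)$, so Proposition \ref{prop_propiedades} (e) applied to $\Omega X$ at $t=n-1$ bounds $\pd\,Z+(n-1)\leq \Psi_{[\D]}(\Omega X)$ for every $Z\mid \Omega^{n}X=\Omega^{n-1}(\Omega X)$ with $\pd\,Z<\infty$; taking the supremum over such $Z$ delivers $n+\findim(\add\,\Omega^{n}X)\leq \Psi_{[\D]}(\Omega X)+1$, which is exactly the desired inequality. If $n=0$, then $\Psi_{[\D]}(X)=\findim(\add\,X)$, and I would treat each summand $Y\mid X$ of finite projective dimension separately: the case $\pd\,Y=0$ is trivial, and when $\pd\,Y\geq 1$ we have $\Omega Y\mid \Omega X$ with $\pd\,\Omega Y=\pd\,Y-1<\infty$, so Proposition \ref{prop_propiedades} (e) applied to $\Omega X$ at $t=0$ gives $\pd\,\Omega Y\leq \Psi_{[\D]}(\Omega X)$, and thus $\pd\,Y\leq \Psi_{[\D]}(\Omega X)+1$.

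The main obstacle is not computational but conceptual: one has to check carefully the inclusion $\overline{L}(\overline{\langle X\rangle})\subseteq \overline{\langle \Omega X\rangle}$, which depends on fixing a convention (e.g., minimal projective covers) so that $\Omega$ is additive on direct sums in a way compatible with the definition of $L$ on iso-classes in $K$. Everything else in the argument reduces to formal manipulations of the Fitting invariant $\eta_{\overline{L}}$ and the already established item (e) of Proposition \ref{prop_propiedades}.
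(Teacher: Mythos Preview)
Your argument is correct and matches the paper's: part (a) is proved identically via the inclusion $\overline{L}(\overline{\langle X\rangle})\subseteq \overline{\langle \Omega X\rangle}$ together with the monotonicity in Fitting's Lemma, and your invocation of Proposition~\ref{prop_propiedades}\,(e) in part (b) is precisely the estimate the paper carries out by hand (there one sets $t:=\Phi_{[\D]}(\Omega X)+1-\Phi_{[\D]}(X)\geq 0$ and uses $\pd Z\leq \pd\,\Omega^{t}Z+t$ for $Z\mid \Omega^{\Phi_{[\D]}(X)}X$, which unwinds to the same inequality without the case split on $\Phi_{[\D]}(X)=0$).
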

 
 \begin{proof}
  (a) The case $\Phi_{[\D]}(X)=0$ is trivial. Thus, we can assume that $\Phi_{[\D]}(X)>0$. Note that $\overline{L}(\overline{\langle X\rangle})$ is a subgroup of $\overline{\langle \Omega X\rangle},$ and hence  by Fitting's Lemma we get that $\Phi_{[\D]}(X)-1=\eta_{\overline L}(\overline{L}(\overline{\langle X\rangle}))\leq \eta_{\overline L}(\overline{\langle \Omega X\rangle})=\Phi_{[\D]}(\Omega X).$
  \
  
(b) Let $Z|\Omega^{\Phi_{[\D]}(X)}(X)$ with $\pd\,Z<\infty.$ Consider a non negative integer $t$ such that $\Phi_{[\D]}(X)+t= \Phi_{[\D]}(\Omega X)+1.$ Therefore $\Omega^t Z|\Omega^{\Phi_{[\D]}(\Omega X)}(\Omega X)$ and $\pd\,\Omega^t Z<\infty.$ By combining all of the above, and the fact that $\pd\,Z\leq \pd\,\Omega^tZ+t,$  we get
$$\pd\,Z\leq \findim\,(\add\,\Omega^{\Phi_{[\D]}(\Omega X)}(\Omega X))+\Phi_{[\D]}(\Omega X)+1-\Phi_{[\D]}(X).$$ Furthermore, by definition, the second term in the above inequality is equal to  
\begin{center}
$\Psi_{[\D]}(\Omega X)+1-\Phi_{[\D]}(X).$ 
\end{center}
Hence, $\Psi_{[\D]}(X)=\findim\,(\add\, \Omega^{\Phi_{[\D]} (X)}( X))+\Phi_{[\D]}(X)\leq \Psi_{[\D]}(\Omega X)+1.$
 \end{proof}

\section{Some applications of the Igusa-Todorov functions}

In this section, we do some applications of the developed theory in order to relate the different homological dimensions which have been discussed through the paper.

 \begin{lem}\label{previoLIT3} For an Artin algebra $\Lambda,$ a non-negative integer $n$ and  a resolving class $\D\subseteq\modu\,(\Lambda),$ the following statements hold true.
\begin{itemize}
\item[(a)] $\Omega^m(\D^\wedge_n)\subseteq \D$ for any $m\geq n.$
\item[(b)] $\resdim_{\D}(\Omega^n(\modu\,(\Lambda)))\leq k\; \Leftrightarrow\; \Omega^{n+k}(\modu\,(\Lambda))\subseteq \D.$
\end{itemize} 
 \end{lem}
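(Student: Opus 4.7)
Plan. The plan is to prove (a) by induction on $n$ and then derive (b) as a short consequence.

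Base case $n=0$ of (a): for $D\in\D$, the projective cover sequence $0\to\Omega D\to P\to D\to 0$ has $P\in\p\subseteq\D$ and $D\in\D$, so $\Omega D\in\D$ by closure of $\D$ under kernels of epimorphisms between its objects (part of being resolving). Iterating gives $\Omega^m M\in\D$ for every $m\geq 0$ whenever $M\in\D$.

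Inductive step: given $M\in\D^\wedge_n$ with $\D$-resolution $0\to D_n\to\cdots\to D_0\to M\to 0$, set $K:=\ker(D_0\to M)$, so that $K\in\D^\wedge_{n-1}$. Choose a surjection $p:P\to M$ with $P$ projective (kernel $\Omega M$), and form the pullback $X:=D_0\times_M P$. Since $P$ is projective, the sequence $0\to K\to X\to P\to 0$ splits, giving $X\cong K\oplus P$; the other projection then produces the short exact sequence
\[ 0 \to \Omega M \to K\oplus P \to D_0 \to 0. \]
Here $K\oplus P\in\D^\wedge_{n-1}$ (taking the termwise direct sum of a length-$(n{-}1)$ $\D$-resolution of $K$ with the trivial length-$0$ resolution of $P\in\D$), while $D_0\in\D\subseteq\D^\wedge_{n-1}$. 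The standard shifting lemma for resolving subcategories (cf.\ \cite{AB, BMPS}) then yields $\Omega M\in\D^\wedge_{n-1}$, and the inductive hypothesis applied to $\Omega M$ produces $\Omega^m M=\Omega^{m-1}(\Omega M)\in\D$ for every $m\geq n$.

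Part (b) is an immediate corollary. For the forward direction, if $\resdim_\D(\Omega^n(\modu\,(\Lambda)))\leq k$, then every $\Omega^n M\in\D^\wedge_k$, and (a) applied to $\Omega^n M$ with the bound $k$ gives $\Omega^{n+k}M=\Omega^k(\Omega^n M)\in\D$. For the converse, if $\Omega^{n+k}(\modu\,(\Lambda))\subseteq\D$, then for each $M$ the tail
\[ 0\to\Omega^{n+k}M\to P_{k-1}\to\cdots\to P_0\to\Omega^n M\to 0 \]
of a projective resolution of $\Omega^n M$ is a $\D$-resolution of length $\leq k$ (each $P_i\in\p\subseteq\D$ and $\Omega^{n+k}M\in\D$), so $\resdim_\D(\Omega^n M)\leq k$ for all $M$.

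The only non-routine point, and thus the main obstacle, is the inductive step of (a): concretely the passage from the pullback short exact sequence to $\Omega M\in\D^\wedge_{n-1}$. The most economical route is to invoke the shifting lemma from \cite{AB, BMPS}; a self-contained alternative is to iterate the pullback construction $n-1$ further times, producing an exact sequence whose middle term lies in $\D$ via the inductive hypothesis on $\Omega^{n-1}K$ together with $\Omega^{n-1}D_0\in\D$, and then applying the kernel-of-epimorphism closure of $\D$ at the top.
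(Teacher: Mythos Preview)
Your proof is correct and very close to the paper's. Both set up the same pullback over the projective cover to obtain the short exact sequence $0\to\Omega M\to K\oplus P\to D_0\to 0$, and part (b) is argued identically. The only difference is in finishing the inductive step of (a): the paper applies the Horseshoe lemma to this sequence to get
\[
0\to\Omega^n M\to\Omega^{n-1}(K\oplus P)\oplus Q\to\Omega^{n-1}D_0\to 0
\]
and concludes $\Omega^n M\in\D$ directly from kernel closure (since $\Omega^{n-1}K\in\D$ by induction and $\Omega^{n-1}D_0\in\D$), whereas your primary route invokes the Auslander--Buchweitz shifting lemma to deduce $\Omega M\in\D^\wedge_{n-1}$ and then recurses. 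Your ``self-contained alternative'' is precisely the paper's Horseshoe argument, so the two write-ups are essentially interchangeable.
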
 
 \begin{proof} Note firstly that $\D$ is closed under isomorphisms and $\Omega^m(\D)\subseteq \D,$ for any $m\geq 0,$ since $\D$ is resolving.
 \
 
 (a) It is enough to show that $\Omega^n(\D^\wedge_n)\subseteq \D.$ We proceed by induction on $n.$  If $C\in \D^\wedge_0,$ then 
 $\Omega^0(C)=C\simeq D\in \D.$
 \
 
 Let $n\geq 1.$ Assume, by inductive hypothesis, that $\Omega^{n-1}(\D^\wedge_{n-1})\subseteq \D.$ Let $C\in \D^\wedge_n.$ Then, there is an exact sequence $0\to K\to D\to C\to 0,$ where $K\in \D^\wedge_{n-1}$ and $D\in\D.$ Consider the following exact and commutative diagram in $\modu\,(\Lambda)$
$$\xymatrix@-0.5pc{&& 0\ar[d] & 0\ar[d] \\
&& \Omega(C)\ar[d]\ar@{=}[r] & \Omega(C)\ar[d] \\
0\ar[r] & K\ar[r]\ar@{=}[d] & U\ar[r]\ar[d] & P\ar[r]\ar[d] & 0 \\ 
0\ar[r] & K\ar[r] & D\ar[r]\ar[d] & C\ar[r]\ar[d] & 0 \\
&& 0 & 0, }$$
where $P\to C$ is the projective cover of $C.$  In particular, the middle row splits and thus $U=K\oplus P.$ Moreover 
$\Omega^{n-1}(U)\simeq\Omega^{n-1}(K)\in\Omega^{n-1}(\D^\wedge_{n-1})\subseteq \D.$
\

Now, let us take the middle column of the above diagram and apply the Horseshoe lemma to obtain the exact sequence
$$0\rightarrow \Omega^n(C)\rightarrow \Omega^{n-1}(U)\oplus Q\rightarrow \Omega^{n-1}(D)\rightarrow 0,$$
with $Q$ a projective $\Lambda$-module. Since $\D$ is resolving,  we get $\Omega^n(C)\in \D.$
\

(b)  Let $\resdim_{\D}(\Omega^n(\modu\,(\Lambda)))\leq k.$ In particular  $\Omega^n(\modu\,(\Lambda))\subseteq \D^\wedge_k$ 
and thus, by (a), we get $\Omega^{n+k}(\modu\,(\Lambda))\subseteq \Omega^k(\D^\wedge_k)\subseteq\D.$
\

Assume that $ \Omega^{n+k}(\modu\,(\Lambda))\subseteq \D.$ Let $C\in \modu\,(\Lambda).$ By using the minimal projective resolution of $C,$ we  get the exact sequence 
$$0\rightarrow \Omega^{n+k}(C)\rightarrow P_{n+k-1}\to\cdots\rightarrow P_{n+1}\rightarrow P_n\rightarrow \Omega^{n}(C)\rightarrow 0,$$ where $P_i$ is projective for each $i.$ Since $\Omega^{n+k}(C)\in\D$ and $\p\subseteq\D,$ we get that $\resdim_\D(\Omega^n(C))\leq k.$
 \end{proof}
 
 \begin{lem}\label{previoLIT3(c)} Let $\Lambda$ be an Artin algebra,   $\D\subseteq\modu\,(\Lambda)$ be closed under extensions and direct summands, and let $\omega$ be closed under direct summands and a $\D$-injective relative cogenerator in $\D.$  If $\Omega^n(\modu\,(\Lambda))\subseteq \D^\wedge$ for a non-negative integer $n,$  then 
$$\id(\omega)-n\leq \resdim_\D(\Omega^n(\modu\,(\Lambda)))\leq \id(\omega).$$
 \end{lem}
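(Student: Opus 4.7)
The stated double inequality splits into two parts, and my plan is to treat them independently. Write $k := \resdim_\D(\Omega^n(\modu\,(\Lambda)))$ and $d := \id(\omega)$, so the goals become $d \leq n+k$ (lower bound) and $k \leq d$ (upper bound).

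For the lower bound I would compute directly. Assume $k<\infty$ (otherwise the inequality is trivial). Fix $W \in \omega$ and an arbitrary $N \in \modu\,(\Lambda)$, and take a $\D$-resolution $0\to D_k\to\cdots\to D_0\to\Omega^n N\to 0$ of length at most $k$. Breaking it into the short exact sequences $0 \to K_{i+1} \to D_i \to K_i \to 0$ with $K_0=\Omega^n N$ and $K_k=D_k\in\D$, and applying $\Hom_\Lambda(-,W)$, the $\D$-injectivity of $\omega$ (i.e.\ $\Ext^{j}_\Lambda(D_i, W) = 0$ for $j\geq 1$) lets me iterate a dimension shift and conclude that $\Ext^j_\Lambda(\Omega^n N, W) = 0$ for every $j > k$. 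Combining this with the standard syzygy isomorphism $\Ext^{n+j}_\Lambda(N, W) \cong \Ext^j_\Lambda(\Omega^n N, W)$ for $j\geq 1$ gives $\Ext^i_\Lambda(N, W) = 0$ for every $i > n+k$; hence $\id(W) \leq n+k$, and taking the supremum over $W\in\omega$ finishes this half.

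For the upper bound I plan to invoke the Auslander--Buchweitz approximation theory, whose standard hypotheses are exactly our conditions on the pair $(\D,\omega)$. Assuming $d<\infty$, the AB formula
$$\resdim_\D(M) \;=\; \pd_\omega(M) \;=\; \min\{s\geq 0 \,:\, \Ext^{j}_\Lambda(M,\omega)=0 \text{ for all } j > s\}$$
holds for every $M\in\D^\wedge$ (cf.\ \cite{AB, BMPS}). Since $\id(W)\leq d$ for every $W\in\omega$, $\pd_\omega(M)\leq d$ trivially. By the hypothesis $\Omega^n(\modu\,(\Lambda))\subseteq\D^\wedge$, the AB formula applies to each such $M$ and yields $\resdim_\D(M)\leq d$; taking the supremum gives $k\leq d$.

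The main obstacle, as I see it, will be the upper bound: the implication $\pd_\omega(M)\leq d \Rightarrow \resdim_\D(M)\leq d$ is the non-trivial direction of the AB formula and rests on the existence of special right $\D$-approximations. If I wanted to avoid citing AB outright, my fallback would be to mimic its proof by taking a right $\D$-approximation $0\to Y\to X_D\to M\to 0$ with $X_D\in\D$ and $Y\in\omega^\vee$, and arguing by induction on $\resdim_\D(M)$ that $\coresdim_\omega(Y)$ strictly decreases at each step, so that the iterated procedure produces a $\D$-resolution of $M$ of length at most $d$.
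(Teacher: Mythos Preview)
Your argument is correct and matches the paper's own proof: both rely on the Auslander--Buchweitz identity $\resdim_\D(M)=\pd_\omega(M)$ for $M\in\D^\wedge$ (the paper cites \cite[Proposition~2.1]{AB}) together with the syzygy shift $\Ext^{j+n}_\Lambda(M,-)|_\omega\cong\Ext^{j}_\Lambda(\Omega^n M,-)|_\omega$, and then take suprema using $\pd_\omega(\modu\,(\Lambda))=\id(\omega)$. The only cosmetic difference is that for the lower bound you redo the easy direction $\pd_\omega\leq\resdim_\D$ by hand via the dimension-shift argument, whereas the paper simply invokes the full AB equality for both inequalities at once.
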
 
 \begin{proof} Let $\Omega^n(\modu\,(\Lambda))\subseteq \D^\wedge,$ $\alpha:=\resdim_\D(\Omega^n(\modu\,(\Lambda)))$ and $M\in\modu\,(\Lambda).$ From \cite[Proposition 2.1]{AB}, we get that 
$\pd_\omega(\Omega^n(M))=\resdim_\D(\Omega^n(M)).$  Therefore, 
by using that  $\Ext^j_\Lambda(\Omega^n(M),-)|_\omega\simeq\Ext^{j+n}_\Lambda(M,-)|_\omega,$ we get that 
$$\pd_\omega(\Omega^n(M))\leq\pd_\omega(M)\leq \pd_\omega(\Omega^n(M))+n\leq \alpha+n.$$
Thus, the result follows since $\pd_\omega(\Omega^n(\modu\,(\Lambda)))=\alpha$ and $\pd_\omega(\modu\,(\Lambda))=\id(\omega).$
\end{proof}
 
 As a first application of the developed theory, we get the following result. 
 
\begin{teo}\label{Phidimcota} For an Artin algebra $\Lambda,$  a non-negative integer $n$ and  a left saturated subclass $\D$ in $\modu\,(\Lambda),$ the following statements hold true. 
\begin{itemize}
\item[(a)] $\findim(\Lambda)\leq\Phidim(\Lambda)\leq\resdim_\D(\Omega^n(\modu\,(\Lambda)))+\Phidim(\D)+n.$
\item[(b)] Let $\omega$ be closed  under direct summands and a $\D$-injective relative cogenerator in $\D.$  If $\Omega^n(\modu\,(\Lambda))\subseteq \D^\wedge,$ then 
$$\findim(\Lambda)\leq\Phidim(\Lambda)\leq\id(\omega)+\Phidim(\D)+n.$$
\end{itemize}
\end{teo}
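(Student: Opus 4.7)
The plan is to bound $\Phi(X)$ for an arbitrary $X\in\modu\,(\Lambda)$ by first bounding $\Phi_{[\D]}(X)$ and then invoking Theorem \ref{thm_comparison}. The leftmost inequality $\findim(\Lambda)\leq\Phidim(\Lambda)$ is immediate from the standard fact that $\Phi(M)=\pd(M)$ whenever $\pd(M)<\infty$ (which can also be recovered from Proposition \ref{prop_propiedades}(d) applied with $\D=\p$, since $\Phidim(\p)=0$).

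For part (a), set $k:=\resdim_\D(\Omega^n(\modu\,(\Lambda)))$ and fix $X\in\modu\,(\Lambda)$; we may assume $k<\infty$ and $\Phidim(\D)<\infty$. Since $\D$ is left saturated it is, in particular, resolving, so Lemma \ref{previoLIT3}(b) gives $\Omega^{n+k}(\modu\,(\Lambda))\subseteq\D$. Now let $Y$ be an indecomposable summand of $X$. If $Y$ is projective, then $L([Y])=0$ already; if $Y$ is non-projective, then $\Omega^{n+k}(Y)\in\D$, hence $\overline{L}^{n+k}([Y])=[\Omega^{n+k}(Y)]+\langle\D\rangle=0$ in $K_\D$. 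Consequently $\overline{L}^{n+k}(\overline{\langle X\rangle})=0$, and so for every $m\geq n+k$ the restriction $\overline{L}\colon \overline{L}^m(\overline{\langle X\rangle})\to \overline{L}^{m+1}(\overline{\langle X\rangle})$ is the trivial isomorphism $0\to 0$. This proves $\Phi_{[\D]}(X)\leq n+k$.

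Combining this estimate with Theorem \ref{thm_comparison} gives
\[
\Phi(X)\;\leq\;\Phi_{[\D]}(X)+\Phidim(\D)\;\leq\;\resdim_\D(\Omega^n(\modu\,(\Lambda)))+\Phidim(\D)+n,
\]
and taking the supremum over $X$ yields (a). For (b), the additional hypothesis $\Omega^n(\modu\,(\Lambda))\subseteq\D^\wedge$ together with the existence of the relative cogenerator $\omega$ allows us to apply Lemma \ref{previoLIT3(c)}, which provides the inequality $\resdim_\D(\Omega^n(\modu\,(\Lambda)))\leq\id(\omega)$. Substituting this into the bound obtained in (a) delivers (b).

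The routine bookkeeping is really to check that projective indecomposable summands of $X$ do not cause trouble when passing to $K_\D$, and that ``left saturated'' provides enough structure (namely the resolving property) to invoke Lemma \ref{previoLIT3}; the only substantive step is the observation $\overline{L}^{n+k}(\overline{\langle X\rangle})=0$, which reduces the control of $\Phi_{[\D]}(X)$ to the global resolution-dimension bound on $\Omega^n(\modu\,(\Lambda))$ and is the heart of the argument.
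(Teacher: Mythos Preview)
Your proof is correct and follows essentially the same route as the paper. The only cosmetic difference is in how the bound $\Phi_{[\D]}(X)\leq n+k$ is obtained: the paper cites Proposition~\ref{previoLIT2}(a) (the inequality $\Phi_{[\D]}(X)\leq \Phi_{[\D]}(\Omega X)+1$) iterated $n+k$ times together with Proposition~\ref{prop_propiedades}(a), whereas you unpack this directly by observing that $\overline{L}^{n+k}(\overline{\langle X\rangle})=0$; these are the same argument at different levels of abstraction.
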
 
 \begin{proof} (a) We can assume that $k:=\resdim_\D(\Omega^n(\modu\,(\Lambda)))$ and $d:=\Phidim(\D)$ are both finite.
 \
 
 By Lemma \ref{previoLIT3} (b) we know that $\Omega^{n+k}(\modu\,(\Lambda))\subseteq \D.$ Furthermore, by Proposition \ref{previoLIT2} (a) and 
 Proposition \ref{prop_propiedades} (a), it follows that $\Phi_{[\D]}\mathrm{dim}(\modu\,(\Lambda)))\leq n+k.$ Finally, from Theorem \ref{thm_comparison}
  we get $\Phidim(\Lambda)\leq \Phi_{[\D]}\mathrm{dim}(\modu\,(\Lambda)))+d;$ proving the result. 
  \
  
  (b)  It follows from (a) and Lemma \ref{previoLIT3(c)}.
 \end{proof}

\begin{teo}\label{Phidimcota'} For an Artin algebra $\Lambda,$  a non-negative integer $n$ and  a left saturated subclass $\D$ in $\modu\,(\Lambda)$ such that $\Phidim(\D)=0,$ the following statements hold true. 
\begin{itemize}
\item[(a)] $\findim(\Lambda)\leq\Phidim(\Lambda)\leq\Psidim(\Lambda)\leq\resdim_\D(\Omega^n(\modu\,(\Lambda)))+n.$
\item[(b)] Let $\omega$ be closed  under direct summands and a $\D$-injective relative cogenerator in $\D.$  If $\Omega^n(\modu\,(\Lambda))\subseteq \D^\wedge,$ then 
$$\findim(\Lambda)\leq\Phidim(\Lambda)\leq\Psidim(\Lambda)\leq\id(\omega)+n.$$
\end{itemize}
\end{teo}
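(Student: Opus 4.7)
The plan is to follow the template of the proof of Theorem \ref{Phidimcota} but exploit the hypothesis $\Phidim(\D)=0$ to both tighten the right-hand side (removing the $\Phidim(\D)$ summand) and extend the chain of inequalities on the left up to $\Psidim(\Lambda)$. The crucial observation is that $\Phidim(\D)=0$ activates Proposition \ref{previoLIT}: part (c) gives $\Psi_{[\D]}\mathrm{dim}(\D)=0$, and part (a) gives $\Psi(X)\leq \Psi_{[\D]}(X)$ for every $X\in\modu\,(\Lambda)$. These are exactly the bridges needed to promote any bound on $\Psi_{[\D]}$ into one on $\Psi$.

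For (a), I would set $k:=\resdim_\D(\Omega^n(\modu\,(\Lambda)))$ and reduce to the case $k<\infty$. Since $\D$ is left saturated (in particular resolving), Lemma \ref{previoLIT3} (b) gives $\Omega^{n+k}(\modu\,(\Lambda))\subseteq \D$. Fix $M\in\modu\,(\Lambda)$ and iterate Proposition \ref{previoLIT2} (b) a total of $n+k$ times to obtain
\[
\Psi_{[\D]}(M)\;\leq\; \Psi_{[\D]}(\Omega^{n+k}M)+(n+k).
\]
Since $\Omega^{n+k}M\in\D$ and $\Psi_{[\D]}\mathrm{dim}(\D)=0$ by Proposition \ref{previoLIT} (c), the first summand vanishes, so $\Psi_{[\D]}\mathrm{dim}(\Lambda)\leq n+k$. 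Proposition \ref{previoLIT} (a) then gives $\Psidim(\Lambda)\leq \Psi_{[\D]}\mathrm{dim}(\Lambda)\leq n+k$, which is the right-most inequality. The remaining inequalities $\findim(\Lambda)\leq\Phidim(\Lambda)\leq\Psidim(\Lambda)$ are the standard ones: Proposition \ref{prop_propiedades} (d) applied with $\D=\p$ (so that $\Phi_{[\D]}=\Phi$ and the hypothesis $\Phidim(\p)=0$ is trivially satisfied) shows $\Phi(X)=\pd X$ whenever $\pd X<\infty$, and $\Phi\leq\Psi$ holds by the very definition of $\Psi$.

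For (b), I would simply invoke Lemma \ref{previoLIT3(c)}: under the additional hypotheses on $\omega$ and the assumption $\Omega^n(\modu\,(\Lambda))\subseteq \D^\wedge$, it yields $\resdim_\D(\Omega^n(\modu\,(\Lambda)))\leq \id(\omega)$, and substituting this into part (a) finishes the proof. No genuine obstacle arises, since all ingredients have already been established in Section 3 and in the previous lemmas of this section; the one non-mechanical step is the telescoping iteration of Proposition \ref{previoLIT2} (b) combined with the vanishing $\Psi_{[\D]}\mathrm{dim}(\D)=0$, which is precisely what converts the $n+k$ syzygy shifts into a clean additive bound.
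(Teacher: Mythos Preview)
Your proposal is correct and follows essentially the same route as the paper's proof: both arguments set $k:=\resdim_\D(\Omega^n(\modu\,(\Lambda)))$, invoke Lemma \ref{previoLIT3} (b) to place $\Omega^{n+k}(\modu\,(\Lambda))$ inside $\D$, then combine Proposition \ref{previoLIT} (a), the iteration of Proposition \ref{previoLIT2} (b), and Proposition \ref{previoLIT} (c) to conclude $\Psidim(\Lambda)\leq n+k$, with part (b) obtained from (a) via Lemma \ref{previoLIT3(c)}. The only difference is cosmetic: you spell out the standard chain $\findim(\Lambda)\leq\Phidim(\Lambda)\leq\Psidim(\Lambda)$ explicitly via Proposition \ref{prop_propiedades} (d), whereas the paper takes it for granted.
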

\begin{proof} (a) Let $k:=\resdim_\D(\Omega^n(\modu\,(\Lambda)))$ be finite. Then, by Lemma \ref{previoLIT3} (b),  we get that $\Omega^{n+k}(\modu\,(\Lambda))\subseteq \D.$ Hence, by Proposition \ref{previoLIT} (a), Proposition \ref{previoLIT2} (b) and Proposition 
\ref{previoLIT} (c), the following inequalities hold true
\begin{align*}
\Psidim(\Lambda) &\leq \Psi_{[\D]}(\modu\,(\Lambda))\\
                           &\leq \Psi_{[\D]}(\Omega^{n+k}(\modu\,(\Lambda)))+n+k\\
                           &\leq \Psi_{[\D]}(\D)+n+k = n+k.
\end{align*}
  
  (b)  It follows from (a) and Lemma \ref{previoLIT3(c)}.
\end{proof}

We recall that, for a given Artin algebra $\Lambda$ and $M\in\modu\,(\Lambda),$ the resolution dimension $\Gpd(M):=\resdim_{\Gproj(\Lambda)}(M)$ is 
known as the Gorenstein projective dimension  of $M.$ Moreover, the  global Gorenstein projective dimension of $\Lambda$ is 
$\mathrm{gl.Gpdim}(\Lambda):=\sup\{\Gpd(M)\;:\;M\in\modu\,(\Lambda)\},$ and the finitistic Gorenstein projective dimension of $\Lambda$ is 
$\mathrm{fin.Gpdim}(\Lambda):=\sup\{\Gpd(M)\;:\;M\in\Gproj(\Lambda)^\wedge\}.$
\

The following result generalises \cite[Theorem 4.7]{LM}.

\begin{teo}\label{CorGPdim}  For an Artin algebra $\Lambda,$ the following statements hold true.
\begin{itemize}
\item[(a)] Let $n$ be any non-negative integer $n.$ Then
$$\mathrm{fin.Gpdim}(\Lambda)=\findim(\Lambda)\leq\Phidim(\Lambda)\leq\Psidim(\Lambda)\leq\Gpd(\Omega^n(\modu\,(\Lambda)))+n.$$
\item[(b)] Let $\mathrm{gl.Gpdim}(\Lambda)$ be finite. Then 
$$\findim(\Lambda)=\Phidim(\Lambda)=\Psidim(\Lambda)=\mathrm{gl.Gpdim}(\Lambda)\leq \id({}_\Lambda\Lambda).$$
\end{itemize}
\end{teo}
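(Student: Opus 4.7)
The strategy is to dismantle each chain of (in)equalities and build it from the machinery already established in the excerpt. For part (a), I would first establish the middle inequalities $\findim(\Lambda)\leq\Phidim(\Lambda)\leq\Psidim(\Lambda)$ by specialising Proposition \ref{prop_propiedades} to $\D=\p$ (so that $\Phi_{[\p]}=\Phi$ and $\Psi_{[\p]}=\Psi$): item (d) with $\Phidim(\p)=0$ gives $\Phi(X)=\pd\,X$ whenever $\pd\,X<\infty$, yielding $\findim(\Lambda)\leq\Phidim(\Lambda)$, while $\Psi(X)\geq\Phi(X)$ is immediate from Definition \ref{defi_psitecho}. The final bound $\Psidim(\Lambda)\leq\Gpd(\Omega^n(\modu\,(\Lambda)))+n$ would then follow by applying Theorem \ref{Phidimcota'}(a) to $\D:=\Gproj(\Lambda)$: Lemma \ref{EjemploBasico} supplies precisely the two required hypotheses (left saturated, and $\Phidim(\D)=0$), and by definition $\resdim_{\Gproj(\Lambda)}(-)=\Gpd(-)$.

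The remaining equality $\mathrm{fin.Gpdim}(\Lambda)=\findim(\Lambda)$ I would derive from two standard Holm-type facts: on the one hand, $\pd\,M<\infty$ forces $\Gpd\,M=\pd\,M$, which gives $\findim(\Lambda)\leq\mathrm{fin.Gpdim}(\Lambda)$; on the other hand, the formula $\Gpd\,M=\sup\{i\geq 0\,:\,\Ext^i_\Lambda(M,L)\neq 0\text{ for some }L\text{ with }\pd\,L<\infty\}$ (valid for $M$ with $\Gpd\,M<\infty$) implies $\Gpd\,M\leq\findim(\Lambda)$, yielding the reverse inequality.

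For part (b), assuming $\mathrm{gl.Gpdim}(\Lambda)<\infty$, every $\Lambda$-module has finite Gorenstein projective dimension, so $\Gproj(\Lambda)^\wedge=\modu\,(\Lambda)$ and therefore $\mathrm{fin.Gpdim}(\Lambda)=\mathrm{gl.Gpdim}(\Lambda)$. Combining this with the equality obtained in part (a) gives $\findim(\Lambda)=\mathrm{gl.Gpdim}(\Lambda)$. Specialising part (a) to $n=0$ yields $\Psidim(\Lambda)\leq\Gpd(\modu\,(\Lambda))=\mathrm{gl.Gpdim}(\Lambda)$, so the entire chain $\findim(\Lambda)\leq\Phidim(\Lambda)\leq\Psidim(\Lambda)\leq\mathrm{gl.Gpdim}(\Lambda)=\findim(\Lambda)$ collapses to equalities. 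The bound $\mathrm{gl.Gpdim}(\Lambda)\leq\id({}_\Lambda\Lambda)$ comes from the classical fact that if $\id({}_\Lambda\Lambda)=d<\infty$ then every $d$-th syzygy is Gorenstein projective (as it lies in ${}^\perp\Lambda=\Gproj(\Lambda)$ under the Iwanaga-Gorenstein condition forced by the finiteness of $\mathrm{gl.Gpdim}(\Lambda)$), hence $\Gpd\,M\leq d$ for every $M$; when $\id({}_\Lambda\Lambda)=\infty$ the inequality is vacuous.

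The principal obstacle is not conceptual but a matter of bookkeeping: one must carefully verify that the hypotheses of Theorem \ref{Phidimcota'} are met by $\Gproj(\Lambda)$ and then cite precise results from Gorenstein homological algebra (Holm's characterisations) to bridge the Igusa-Todorov estimates produced by that theorem with the finitistic Gorenstein projective dimension. Once these two bridges are in place, the rest of the argument is a straightforward concatenation of the already-proved statements.
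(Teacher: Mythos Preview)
Your proposal is correct and the core strategy---apply Theorem~\ref{Phidimcota'}(a) with $\D=\Gproj(\Lambda)$, using Lemma~\ref{EjemploBasico} to verify the hypotheses---is exactly the paper's. The differences lie only at the edges. For the equality $\mathrm{fin.Gpdim}(\Lambda)=\findim(\Lambda)$, the paper simply cites an external result \cite[Theorem~4.23]{BMS}, whereas you sketch a direct argument via Holm's characterisation of $\Gpd$; your route is more self-contained but imports the same Gorenstein-homological input. For the final bound $\mathrm{gl.Gpdim}(\Lambda)\leq\id({}_\Lambda\Lambda)$ in~(b), the paper stays internal: it applies Theorem~\ref{Phidimcota'}(b) with $\omega=\p_\Lambda$ (a $\Gproj(\Lambda)$-injective relative cogenerator in $\Gproj(\Lambda)$) and $n=0$, obtaining $\Psidim(\Lambda)\leq\id(\p_\Lambda)=\id({}_\Lambda\Lambda)$ directly from the machinery just built, and then reads off the bound via the equalities already established. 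You instead step outside to classical Gorenstein theory (arguing that finiteness of $\mathrm{gl.Gpdim}(\Lambda)$ forces ${}^\perp\Lambda=\Gproj(\Lambda)$, so that $d$-th syzygies are Gorenstein projective). Both approaches are valid; the paper's choice has the virtue of showing that Theorem~\ref{Phidimcota'}(b) already encodes this classical inequality without further appeal to the structure theory of Gorenstein algebras.
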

\begin{proof} (a)  Note firstly, that the pair $(\p_\Lambda, \p_\Lambda)$ satisfies the needed conditions in \cite[Theorem 4.23]{BMS}, for the abelian category $\modu\,(\Lambda);$ and thus $\mathrm{fin.Gpdim}(\Lambda)=\findim(\Lambda).$
\

Consider the class $\D:=\Gproj(\Lambda)\subseteq\modu\,(\Lambda).$ Then, the item (a) follows from Lemma 3.7 and Theorem \ref{Phidimcota'} (a).
\

(b) Since $\mathrm{gl.Gpdim}(\Lambda)$ is finite, we get that $\mathrm{fin.Gpdim}(\Lambda)=\mathrm{gl.Gpdim}(\Lambda).$ 
 Finally, the item (b) follows from (a), by taking $n=0,$ and Theorem \ref{Phidimcota'} (b) due to the fact that $\p_\Lambda$ is a $\Gproj(\Lambda)$-injective relative cogenerator in $\Gproj(\Lambda)$ and $\modu\,(\Lambda)=\Gproj(\Lambda)^\wedge.$
\end{proof}

By using hereditary cotorsion pairs in $\modu\,(\Lambda),$ there is another consequence of Theorems \ref{Phidimcota} and \ref{Phidimcota'}. In order to state this result, we need the following lemma.

\begin{lem}\label{cothered1}  Let $\Lambda$ be an Artin algebra and $(\X,\Y)$ be a hereditary complete cotorsion pair in 
$\modu\,(\Lambda).$ Then,  $\X$ is a left saturated class in $\modu\,(\Lambda)$ and $\omega:=\X\cap\Y$ is an $\X$-injective relative cogenerator in 
$\X.$
\end{lem}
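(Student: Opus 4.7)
The plan is to verify in turn the three defining conditions: (i) $\X$ is resolving and closed under direct summands; (ii) $\Ext^i_\Lambda(X,W)=0$ for all $X\in\X$, $W\in\omega$ and $i\geq 1$; and (iii) every $X\in\X$ embeds into some $W\in\omega$ with cokernel in $\X$.

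For (i), heredity of the cotorsion pair bakes in by definition that $\X$ is resolving, so only closure under direct summands is left to check. This comes for free from the identification $\X={}^{\perp_1}\Y$: if $X_1\oplus X_2\in\X$, then for every $Y\in\Y$ we have $\Ext^1_\Lambda(X_1,Y)\oplus\Ext^1_\Lambda(X_2,Y)\simeq\Ext^1_\Lambda(X_1\oplus X_2,Y)=0$, forcing both summands into $\X$. Combined with the resolving property this gives the left saturated property.

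For (ii), since $\omega\subseteq\Y$, it suffices to prove $\Ext^i_\Lambda(X,Y)=0$ for all $X\in\X$, $Y\in\Y$ and $i\geq 1$. I would induct on $i$: the case $i=1$ is the cotorsion pair axiom $\X={}^{\perp_1}\Y$. For $i\geq 2$, I take a short exact sequence $0\to\Omega X\to P\to X\to 0$ with $P$ projective; since $\X$ is resolving and contains $P$, we get $\Omega X\in\X$. Dimension shifting then yields $\Ext^i_\Lambda(X,Y)\simeq\Ext^{i-1}_\Lambda(\Omega X,Y)=0$ by the inductive hypothesis. Hence $\id_\X(\omega)=0$, so $\omega$ is $\X$-injective.

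For (iii), I invoke completeness of the cotorsion pair to produce, for each $X\in\X$, a short exact sequence $0\to X\to Y\to X'\to 0$ with $Y\in\Y$ and $X'\in\X$. Because $\X$ is closed under extensions (being resolving) and both $X$ and $X'$ belong to $\X$, the middle term $Y$ also lies in $\X$, whence $Y\in\X\cap\Y=\omega$. This is exactly the required cogenerator sequence. The only mildly non-trivial step is the dimension shift in (ii); all other steps are essentially unpacking definitions, so I do not anticipate a real obstacle.
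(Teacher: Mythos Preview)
Your proof is correct and follows essentially the same route as the paper. The paper is simply terser: it records $\X={}^{\perp_1}\Y={}^\perp\Y$ (which encodes your dimension-shift in (ii) and, via the earlier remark that ${}^\perp\Y$ is always left saturated, your step (i)), and then invokes completeness for the cogenerator property exactly as you do in (iii), including the observation that the middle term lands in $\X\cap\Y$ by extension closure.
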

\begin{proof} By  using that $\X={}^{\perp_1}\Y={}^\perp\Y,$ it can be shown that $\X$ is left saturated. Since $(\X,\Y)$ is a complete cotorsion pair, it follows that $\omega$ is a relative cogenerator in $\X.$ Finally, the fact that $\id_\X(\Y)=0$ implies that $\id_\X(\omega)=0.$
\end{proof}

\begin{cor}\label{CotPhidimcota} Let $\Lambda$ be an Artin algebra and $(\X,\Y)$ be a hereditary complete cotorsion pair in 
$\modu\,(\Lambda)$ such that $\Omega^n(\modu\,(\Lambda))\subseteq \X^\wedge,$ for some non-negative integer $n.$ Then, for $\omega:=\X\cap\Y,$  the following statements hold true.
\begin{itemize}
\item[(a)] $\findim(\Lambda)\leq\Phidim(\Lambda)\leq\id(\omega)+\Phidim(\X)+n.$
\item[(b)] If $\Phidim(\X)=0,$ then  $\findim(\Lambda)\leq\Phidim(\Lambda)\leq\Psidim(\Lambda)\leq\id(\omega)+n.$
\end{itemize}

\end{cor}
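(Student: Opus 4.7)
The strategy is simply to reduce the corollary to Theorems \ref{Phidimcota}(b) and \ref{Phidimcota'}(b) by taking $\D:=\X,$ and to verify that the hypotheses of those theorems are satisfied thanks to Lemma \ref{cothered1} together with the standing assumptions on the cotorsion pair $(\X,\Y).$

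First, I would invoke Lemma \ref{cothered1} to obtain at once that $\X$ is a left saturated class in $\modu\,(\Lambda)$ and that $\omega=\X\cap\Y$ is closed under direct summands and is an $\X$-injective relative cogenerator in $\X.$ The fact that $\omega$ is closed under direct summands follows because both $\X$ and $\Y$ are (as the left and right halves of a complete hereditary cotorsion pair, each one equals a class of the form ${}^\perp(-)$ or $(-)^\perp$). These are precisely the structural hypotheses required to apply the second parts of Theorems \ref{Phidimcota} and \ref{Phidimcota'} with $\D=\X.$

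With $\D=\X,$ the extra hypothesis $\Omega^n(\modu\,(\Lambda))\subseteq \X^\wedge=\D^\wedge$ is exactly what is assumed in the corollary. Therefore Theorem \ref{Phidimcota}(b) immediately yields
\[
\findim(\Lambda)\leq\Phidim(\Lambda)\leq \id(\omega)+\Phidim(\X)+n,
\]
which is part (a). For part (b), the additional assumption $\Phidim(\X)=0$ is precisely the hypothesis $\Phidim(\D)=0$ appearing in Theorem \ref{Phidimcota'}; applying part (b) of that theorem gives
\[
\findim(\Lambda)\leq\Phidim(\Lambda)\leq\Psidim(\Lambda)\leq \id(\omega)+n.
\]

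There is no genuine obstacle here; the only point requiring any care is the bookkeeping verification that a hereditary complete cotorsion pair really supplies a left saturated class together with an $\X$-injective relative cogenerator closed under direct summands, but this has already been isolated in Lemma \ref{cothered1}. Thus the corollary is a direct repackaging of the two main theorems of Section 4 in the language of cotorsion pairs.
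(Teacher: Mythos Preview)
Your proposal is correct and follows exactly the paper's own proof: take $\D:=\X$ and $\omega:=\X\cap\Y$, invoke Lemma \ref{cothered1} to supply the structural hypotheses, and then apply Theorem \ref{Phidimcota}(b) and Theorem \ref{Phidimcota'}(b). Your additional remark that $\omega$ is closed under direct summands (needed for those theorems but not stated explicitly in Lemma \ref{cothered1}) is a welcome clarification.
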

\begin{proof} By taking $\D:=\X$ and $\omega:=\X\cap\Y,$ the result follows from Theorem \ref{Phidimcota} (b), Theorem \ref{Phidimcota'} (b) and Lemma \ref{cothered1}.
\end{proof}

A special kind of hereditary cotorsion pairs in $\modu\,(\Lambda)$ are the ones associated to cotilting $\Lambda$-modules. In the following lemma we bring out to the light the needed properties of these cotilting cotorsion pairs in order to proof the theorem below. 

\begin{lem}\label{LemaTCH} Let $\Lambda$ be an Artin algebra and $T\in\modu\,(\Lambda)$ be a cotilting $\Lambda$-module. Then, the following statements hold true.
\begin{itemize}
\item[(a)] $({}^\perp T,(\add\,T)^\wedge)$ is a hereditary complete cotorsion pair in $\modu\,(\Lambda).$
\item[(b)]  $\add\,T={}^\perp T\cap(\add\,T)^\wedge$ and $({}^\perp T)^\wedge=\modu\,(\Lambda).$
\item[(c)] $\add(T)$ is ${}^\perp T$-injective and a relative cogenerator in the left saturated class ${}^\perp T.$
\end{itemize} 
\end{lem}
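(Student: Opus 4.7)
My plan is to reduce the lemma to the classical Auslander--Buchweitz/Miyashita machinery attached to a cotilting module, and to verify the three items in the order (c), then (a), and finally (b), since each later item rests on the preceding one. The main obstacle throughout will be producing the two approximation sequences needed for completeness of the cotorsion pair; once they are in hand, everything else is either a direct consequence of the definition of ${}^\perp T$ or a routine induction on coresolution dimension.

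For part (c), I first observe that the $\Ext$-vanishing $\Ext^i_\Lambda(T,T)=0$ for $i\geq1$ gives $\add\,T\subseteq{}^\perp T$, and the ${}^\perp T$-injectivity $\id_{{}^\perp T}(\add\,T)=0$ is built into the definition of the left orthogonal class. The class ${}^\perp T$ is left saturated because it is always closed under extensions, kernels of epimorphisms, direct summands, and contains $\p_\Lambda$. To show that $\add\,T$ is a relative cogenerator in ${}^\perp T$, I would take $M\in{}^\perp T$, embed it in its injective envelope $M\hookrightarrow E(M)$, and then exploit the hypothesis $\resdim_{\add\,T}(E(M))<\infty$ together with a standard splicing argument (dual to the Auslander--Buchweitz construction) to produce a short exact sequence $0\to M\to W\to M'\to 0$ with $W\in\add\,T$ and $M'\in{}^\perp T$; the second factor lies in ${}^\perp T$ since ${}^\perp T$ is closed under cokernels of inclusions whose quotient we can control by the long exact sequence of $\Ext$.

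For part (a), the class $(\add\,T)^\wedge$ contains every injective module (by the cotilting hypothesis) and is coresolving because $\add\,T$ is closed under summands and direct sums. The orthogonality $(\add\,T)^\wedge\subseteq({}^\perp T)^{\perp_1}$ follows by a straightforward induction on $\resdim_{\add\,T}(Y)$ using the long exact sequence of $\Ext$, together with the fact that $\add\,T\subseteq{}^\perp T$ already tells us $\Ext^{\geq1}_\Lambda({}^\perp T,\add\,T)=0$. For the reverse inclusion and for completeness I would use (c): for any $M\in\modu\,(\Lambda)$ embed $M\hookrightarrow E(M)$, and by iteratively doing the $\add\,T$-approximation of the quotient combined with horseshoe arguments, produce both sequences $0\to M\to Y\to X\to 0$ and $0\to Y'\to X'\to M\to 0$ with $X,X'\in{}^\perp T$ and $Y,Y'\in(\add\,T)^\wedge$. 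Any $N\in({}^\perp T)^{\perp_1}$ then fits in such a sequence $0\to N\to Y\to X\to 0$ which is forced to split by $\Ext^1_\Lambda(X,N)=0$, giving $N\in(\add\,T)^\wedge$. The pair is hereditary because ${}^\perp T$ is resolving and $(\add\,T)^\wedge$ is coresolving.

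For part (b), one inclusion is trivial: $\add\,T\subseteq{}^\perp T$ by the Ext-vanishing and $\add\,T\subseteq(\add\,T)^\wedge$ tautologically. For the converse, take $M\in{}^\perp T\cap(\add\,T)^\wedge$, choose a finite $\add\,T$-resolution, and successively apply the fact that a short exact sequence $0\to K\to T_0\to M\to 0$ with $T_0\in\add\,T$ and $M\in{}^\perp T$ must split once the lower syzygies land in ${}^\perp T$ (which they do, as ${}^\perp T$ is closed under kernels of epimorphisms); iterating, $M$ becomes a direct summand of some object of $\add\,T$, hence $M\in\add\,T$. Finally, $({}^\perp T)^\wedge=\modu\,(\Lambda)$ is immediate from completeness of the cotorsion pair in (a): using the approximation $0\to Y'\to X'\to M\to 0$ with $X'\in{}^\perp T$ and $Y'\in(\add\,T)^\wedge\subseteq({}^\perp T)^\wedge$, we get $M\in({}^\perp T)^\wedge$ as desired.
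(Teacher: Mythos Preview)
Your route differs from the paper's. The paper does not argue from scratch: it invokes \cite[Theorem~5.17]{BMPS} to obtain (a) and (b) in one stroke, and then deduces (c) from (a) and (b) via Lemma~\ref{cothered1}, which says that for \emph{any} hereditary complete cotorsion pair $(\X,\Y)$ the class $\omega=\X\cap\Y$ is an $\X$-injective relative cogenerator in $\X$. You reverse the order: you establish (c) first by a direct Auslander--Buchweitz construction, then manufacture the approximation sequences for (a) from (c), and read off (b) at the end. Your route is more self-contained and essentially reproduces what lies behind the cited theorem; the paper's route is shorter but leans on an external reference and on the general Lemma~\ref{cothered1}.

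Two points to tighten. First, your justification that $(\add T)^\wedge$ is coresolving ``because $\add T$ is closed under summands and direct sums'' is not valid: for $T=\Lambda$ this would say $\p^{<\infty}$ is coresolving, which is false in general. You do not actually need this step, since once you have established that $({}^\perp T,(\add T)^\wedge)$ is a cotorsion pair with ${}^\perp T$ resolving, hereditariness (and hence coresolving on the right) follows automatically. Second, the ``standard splicing argument'' producing $0\to M\to W\to M'\to 0$ with $W\in\add T$ and $M'\in{}^\perp T$ is the genuine content of the lemma and deserves more than a phrase; it requires both $\resdim_{\add T}(E(M))<\infty$ and $\id(T)<\infty$, and the construction is not just a single pushout but an iteration along the finite $\add T$-resolution of the injective.
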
 
\begin{proof}
Since $T$ is cotilting, by the proof of \cite[Theorem 5.17]{BMPS}, it can be seen that the cotorsion pair $({}^\perp T,(\add\,T)^\wedge)$ is hereditary and complete in $\modu\,(\Lambda),$  $\add\,T={}^\perp T\cap(\add\,T)^\wedge$ and $({}^\perp T)^\wedge=\modu\,(\Lambda).$ In particular, by Lemma \ref{cothered1}, we obtain that ${}^\perp T$ is left saturated and 
$\add(T)$ is a ${}^\perp T$-injective and a relative cogenerator in ${}^\perp T.$ 
\end{proof}

Now, we are ready to state and prove the latest application, in this section, of Theorem \ref{Phidimcota}.

\begin{teo}\label{AppCotilting} Let $\Lambda$ be an Artin algebra and $T\in\modu\,(\Lambda)$ be a cotilting $\Lambda$-module. Then, the following statements hold true. 
\begin{itemize}
\item[(a)] $\id(T)=\resdim_{{}^\perp T}(\modu\,(\Lambda)).$
\item[(b)] $\findim(\Lambda)\leq\Phidim(\Lambda)\leq\id(T)+\Phidim({}^\perp T).$
\item[(c)] If $\Phidim({}^\perp T)=0,$ then  $\findim(\Lambda)\leq\Phidim(\Lambda)\leq\Psidim(\Lambda)\leq \id(T).$
\end{itemize}
\end{teo}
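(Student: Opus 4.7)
The proof will proceed by reducing all three statements to the general machinery established in Theorem \ref{Phidimcota}, Theorem \ref{Phidimcota'}, Lemma \ref{previoLIT3(c)} and Corollary \ref{CotPhidimcota}, with the cotilting cotorsion pair $({}^\perp T,(\add\,T)^\wedge)$ supplying the ingredients. The plan is to take $\D:={}^\perp T$ and $\omega:=\add\,T,$ and then everything reduces to checking that this choice fits the hypotheses of the general results. Lemma \ref{LemaTCH} does exactly that: $\D$ is left saturated, $\omega$ is closed under direct summands, $\omega$ is a $\D$-injective relative cogenerator in $\D,$ and moreover $\D^\wedge=\modu\,(\Lambda),$ so the hypothesis $\Omega^n(\modu\,(\Lambda))\subseteq\D^\wedge$ is trivially satisfied with $n=0.$

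For part (a), I will apply Lemma \ref{previoLIT3(c)} with $n=0.$ The double inequality collapses to
$$\id(\add\,T)\;\leq\;\resdim_{{}^\perp T}(\modu\,(\Lambda))\;\leq\;\id(\add\,T),$$
and since $\id(\add\,T)=\id(T)$ (injective dimension is preserved under finite direct sums and passage to direct summands), we conclude $\id(T)=\resdim_{{}^\perp T}(\modu\,(\Lambda)).$

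For part (b), I will invoke Corollary \ref{CotPhidimcota}(a) with the cotorsion pair $(\X,\Y)=({}^\perp T,(\add\,T)^\wedge)$ and $n=0,$ which yields
$$\findim(\Lambda)\leq\Phidim(\Lambda)\leq\id(\omega)+\Phidim({}^\perp T)+0=\id(T)+\Phidim({}^\perp T).$$
For part (c), under the additional hypothesis $\Phidim({}^\perp T)=0,$ I apply Corollary \ref{CotPhidimcota}(b) to obtain
$$\findim(\Lambda)\leq\Phidim(\Lambda)\leq\Psidim(\Lambda)\leq\id(\omega)+0=\id(T).$$

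In other words, no new ideas are needed beyond those of the preceding sections; the entire content is verifying that a cotilting module provides an instance of the hereditary complete cotorsion pair framework with $\omega=\X\cap\Y=\add\,T.$ The only step that requires any thought is matching up the invariants, namely recognising that $\id(\omega)=\id(T)$ and that $({}^\perp T)^\wedge=\modu\,(\Lambda)$ lets us take $n=0;$ everything else is bookkeeping. There is no real obstacle here, since the hard work has already been done in Lemma \ref{LemaTCH} and in the general dimension bounds of Section 4.
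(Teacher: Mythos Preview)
Your proposal is correct and follows essentially the same route as the paper: set $\D={}^\perp T$, $\omega=\add\,T$, $n=0$, invoke Lemma~\ref{LemaTCH} to verify the hypotheses, and then read off (b) and (c) from Corollary~\ref{CotPhidimcota}. The only difference is in part~(a): the paper appeals directly to \cite[Proposition~2.1]{AB} to get $\pd_{\add(T)}(M)=\resdim_{{}^\perp T}(M)$ for every $M$, whereas you obtain the equality by specialising Lemma~\ref{previoLIT3(c)} to $n=0$; since that lemma is itself proved via \cite[Proposition~2.1]{AB}, the two arguments are really the same.
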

\begin{proof} Since $T$ is cotilting, by Lemma \ref{LemaTCH} we know that  $({}^\perp T,(\add\,T)^\wedge)$ is a hereditary complete cotorsion pair in $\modu\,(\Lambda),$  $\add\,T={}^\perp T\cap(\add\,T)^\wedge,$  $({}^\perp T)^\wedge=\modu\,(\Lambda)$ and $\add(T)$ is a ${}^\perp T$-injective and a relative cogenerator in the left saturated class ${}^\perp T.$  Then,  the proof of (b) and (c) can be obtained from Corollary \ref{CotPhidimcota} by taking $n:=0,$ $\omega:=\add(T)$ and $\D:={}^\perp T.$ 
\

Let us prove (a). By the discussed  properties of the cotorsion pair  $({}^\perp T,(\add\,T)^\wedge),$ which are given above, we can apply \cite[Proposition 2.1]{AB}; and thus we get that $\pd_{\add(T)}(M)=\resdim_{{}^\perp T}(M),$ for any $M\in\modu\,(\Lambda).$ Therefore (a) holds true since $\pd_{\add(T)}(\modu\,(\Lambda))=\id(\add(T))=\id(T).$
\end{proof}

As an application of the above theorem, we can get the following result which was firstly obtained in \cite{ GS, LMata}. In order to state this, we recall that an Artin algebra $\Lambda$ is Gorenstein if 
$\id({}_\Lambda\Lambda)$ and $\id(\Lambda_\Lambda)$ are both finite. In this case, it is well known that $n:=\id({}_\Lambda\Lambda)=\id(\Lambda_\Lambda)$ and thus $\Lambda$ is called $n$-Gorenstein. 

\begin{cor}  If $\Lambda$ is a $n$-Gorenstein Artin algebra, then
$$\findim(\Lambda)=\Phidim(\Lambda)=\Psidim(\Lambda)=\mathrm{gl.Gpdim}(\Lambda)=n.$$
\end{cor}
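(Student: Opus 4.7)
The plan is to pull $n$ out as both an upper bound and a lower bound for the common chain, using the two major applications developed in this section, and then invoke Theorem~\ref{CorGPdim}~(b) to absorb $\mathrm{gl.Gpdim}(\Lambda)$ into the equalities. The key inputs are $T := {}_\Lambda\Lambda$ and $\D := {}^\perp\Lambda$: by Lemma~\ref{EjemploBasico}, $\Phidim({}^\perp\Lambda) = 0$, which is precisely the extra hypothesis that activates the sharper bound of Theorem~\ref{AppCotilting}~(c).

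First I would verify that ${}_\Lambda\Lambda$ is a cotilting module, so that Theorem~\ref{AppCotilting} applies with $T := {}_\Lambda\Lambda$. The conditions $\id({}_\Lambda\Lambda)=n<\infty$ and $\Ext^i_\Lambda(\Lambda,\Lambda)=0$ for $i>0$ are immediate. For the condition $\resdim_{\add\,\Lambda}(I)<\infty$ on injective objects $I\in\modu\,(\Lambda)$, I would use the Artin duality $D$: every injective is a summand of $D\Lambda$, and $\pd(D\Lambda)=\id(\Lambda_\Lambda)=n<\infty$ by the Gorenstein hypothesis. Once the cotilting condition is established, Theorem~\ref{AppCotilting}~(c) yields
\[
\findim(\Lambda)\leq\Phidim(\Lambda)\leq\Psidim(\Lambda)\leq\id({}_\Lambda\Lambda)=n.
\]

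For the matching lower bound, the same duality computation $\pd(D\Lambda)=n$ exhibits a concrete $\Lambda$-module of finite projective dimension exactly $n$, so $\findim(\Lambda)\geq n$, and the first three quantities coincide and equal $n$. To close the chain I need $\mathrm{gl.Gpdim}(\Lambda)=n$ as well. I would argue $\mathrm{gl.Gpdim}(\Lambda)\leq n$ by showing $\Omega^n(M)\in\Gproj(\Lambda)$ for every $M\in\modu\,(\Lambda)$: the condition $\id({}_\Lambda\Lambda)=n$ forces $\Omega^n(M)\in{}^\perp\Lambda$ (since $\Ext^i_\Lambda(\Omega^n M,\Lambda)\simeq\Ext^{i+n}_\Lambda(M,\Lambda)=0$ for $i\geq 1$), and the symmetric condition $\id(\Lambda_\Lambda)=n$ is what promotes ${}^\perp\Lambda$ to $\Gproj(\Lambda)$ via the classical Iwanaga--Gorenstein identification. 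With $\mathrm{gl.Gpdim}(\Lambda)$ now known to be finite, Theorem~\ref{CorGPdim}~(b) glues the last quantity to the chain, producing the desired string of equalities.

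The main obstacle is precisely the identification ${}^\perp\Lambda=\Gproj(\Lambda)$ for Iwanaga--Gorenstein algebras: it is standard in Gorenstein homological algebra but is not explicitly isolated in the excerpt, so one would have to cite it (or derive it ad hoc by splicing a projective resolution of $\Omega^n M$ with a resolution of its opposite-side counterpart transported back through duality to manufacture the required totally acyclic complex). Every other step is a direct application of the machinery built up earlier in the paper, which is why the result fits as a neat corollary of Theorem~\ref{AppCotilting} and Theorem~\ref{CorGPdim}.
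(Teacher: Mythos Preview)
Your proof is correct and follows essentially the same approach as the paper: take $T={}_\Lambda\Lambda$ as cotilting, apply Lemma~\ref{EjemploBasico} and Theorem~\ref{AppCotilting}~(c) for the upper bound, and use $\pd(D(\Lambda_\Lambda))=\id(\Lambda_\Lambda)=n$ for the lower bound; the identification $\Gproj(\Lambda)={}^\perp\Lambda$ for Gorenstein algebras is exactly the extra input the paper also invokes without proof. The only difference is that for $\mathrm{gl.Gpdim}(\Lambda)=n$ the paper reads it off directly from Theorem~\ref{AppCotilting}~(a), since $\resdim_{{}^\perp T}(\modu\,(\Lambda))=\resdim_{\Gproj(\Lambda)}(\modu\,(\Lambda))=\mathrm{gl.Gpdim}(\Lambda)$, which spares your detour through Theorem~\ref{CorGPdim}~(b).
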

\begin{proof} Let $\Lambda$ be a $n$-Gorenstein Artin algebra. In this case, we have that $T:={}_\Lambda\Lambda$ is a cotilting module in $\modu\,(\Lambda)$ and moreover $\Gproj(\Lambda)={}^\perp T.$ Thus, the corollary follows from Lemma \ref{EjemploBasico} and Theorem \ref{AppCotilting} 
since $\id({}_\Lambda\Lambda)=\id(\Lambda_\Lambda)=\pd(D(\Lambda_\Lambda))\leq \findim(\Lambda),$ where $D:\modu\,(\Lambda)\to\modu\,(\Lambda^{op})$ is the usual duality functor.
\end{proof}

\section{Lat-Igusa-Todorov algebras}

In this section, we examine the definition of Igusa-Todorov algebra, given by J. Wei in \cite{W}, and offer a generalisation in such a way that the finitistic dimension conjecture holds for this new family of algebras and it contains properly the class of Igusa-Todorov algebras.
\ 

The idea for defining an Igusa-Todorov algebra can be seen under the light of an Auslander generator. More explicitly,  for an Artin algebra 
$\Lambda$ which has representation dimension at most $3,$ it is well known that there exists some $V\in\modu\,(\Lambda),$ called Auslander's generator, such that each  $M\in\modu\,(\Lambda)$ admits an exact sequence $0\to V_1\to V_0\to M\to 0$, with $V_0,V_1\in \add\,V.$ In the case of Wei's definition of an Igusa-Todorov algebra \cite{W}, it is requiered the existence of a module $V\in\modu\,(\Lambda)$ and a non negative integer $n$ such that, for every $M\in\modu\,(\Lambda),$ the $n$-syzygy of $M$ admits an exact sequence $0\to V_1\to V_0\to \Omega^nM\to 0$, with $V_0,V_1\in \add\,V.$ The class of Artin algebras satisfying the previous condition are called $n$-Igusa-Todorov algebras ($n$-IT-algebras, for short). Many well known families of algebras are $n$-IT-algebras, for example the classes of: 
\begin{itemize}
\item monomial algebras,
\item special biserial algebras,
\item tilted algebras, 
\item algebras with radical square zero.
\end{itemize} 
Actually, in \cite{W}, J.  Wei proved that the class of $n$-IT-algebras satisfy the finitistic dimension conjecture and raised the question of whether all algebras are Igusa-Todorov algebras. The answer was laying in an earlier paper published by R. Rouquier \cite{R}, as pointed out by T. Conde in \cite{TC}. Using a result on Rouquier's paper, T. Conde was able to provide a family of algebras that are not $n$-IT-algebras, for any $n\geq 0,$ namely the exterior algebra of any vector space that has dimension equal or greater than 3. These examples are instances of self-injective algebras, which have been widely studied and for them being $n$-IT is equivalent to having representation dimension at most $3$. Regarding this last condition there are some classes of self-injective algebras for which it has been proved, those being monomial special multiserial \cite{SS}, wild tilted type \cite{Trepode1} and euclidean type \cite{Trepode2}. The examples, provided by T. Conde, do not belong to any of those classes. Moreover they are of wild representation type.
\

In Section 3, we have introduced the generalised Igusa-Todorov functions and as we have said, one of our main objectives is to generalise the definition given by J. Wei in order to obtain a strictly larger class of algebras that will also satisfy the finitistic dimension conjecture. The rest of the paper is devoted to this purpose.

\begin{defi} An $n$-Lat-Igusa-Todorov algebra ($n$-LIT-algebra, for short), where $n$ is a non-negative integer, is an Artin algebra $\Lambda$ satisfying the following two conditions:
\begin{itemize}
\item[(a)]  there is some class $\D\subseteq\modu\,(\Lambda)$ such that $\add\,\D=\D,$  $\Omega(\D)\subseteq\D$ and $\Phidim\,(\D)=0;$
\item[(b)] there is some $V\in\modu\,(\Lambda)$ satisfying that each $M\in \modu\,(\Lambda)$ admits an exact sequence 
$$0\longrightarrow X_1\longrightarrow X_0\longrightarrow \Omega^nM\longrightarrow 0,$$ such that $X_1=V_1\oplus D_1$, $X_0=V_0\oplus D_0$, with $V_1,V_0\in \add\,V$ and $D_1,D_0\in \D.$
\end{itemize}
In case we need to specify the class $\D$ and the $\Lambda$-module $V,$ in the above definition, we say that $\Lambda$ is a $(n,V,\D)$-LIT-algebra.
\end{defi}

\begin{ex} Let $\Lambda$ be an Artin algebra.
\begin{itemize}
\item[(1)] If $\Lambda$ is an $n$-IT-algebra, then it is also an $n$-LIT-algebra.\\
 Indeed, just take the same module $V,$ appearing in the definition of $n$-IT-algebra, and $\D:=\{0\}.$ 
\item[(2)] If $\Lambda$ is a self-injective Artin algebra, then $\Lambda$ is a $0$-LIT-algebra.\\
 Indeed, it has been proved in \cite{HL} that $\Phidim\,(\Lambda)=0$, so by taking 
$\D:=\modu\,(\Lambda)$ and $V=0,$  we get that $\Lambda$ is a $0$-LIT-algebra.
\end{itemize}   
\end{ex} 

\begin{remark}
In connection with the finitistic dimension conjecture, there is the question of whether the $\Phi$-dimension of a given Artin algebra is always finite. This question has been recently solved independently by M. Barrios, G. Mata \cite{Barrios} and E. Hanson, K. Igusa \cite{Hanson}. By construction, both counterexamples given in \cite{Barrios, Hanson} are Artin algebras of radical cube zero,  and thus by Corollary 3.5 in \cite{W}, they are $1$-IT (also $1$-LIT). In particular, these algebras have finite finitistic dimension.
\end{remark}

As we can see, the class of the $n$-LIT-algebras is strictly larger than the class of the $n$-IT-algebras and contains most of the known examples of algebras satisfying the finitistic dimension conjecture. Next, we prove that the finitistic dimension conjecture holds for the class of $n$-LIT-algebras.

\begin{teo}\label{MainResult}
Let $\Lambda$ be a $(n,V,\D)$-LIT-algebra. Then $$\findim\,(\Lambda)\leq \Psi_{[\D]}(V)+n+1<\infty.$$
\end{teo}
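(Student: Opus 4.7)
The plan is to bound $\pd\,M$ uniformly for every $M \in \modu\,(\Lambda)$ with $\pd\,M < \infty$, from which the theorem will follow by taking suprema. If $\pd\,M \leq n$ then $\pd\,M \leq \Psi_{[\D]}(V) + n + 1$ is automatic, so I may assume $\pd\,M > n$. Splicing a truncation of the minimal projective resolution of $M$ with any projective resolution of $\Omega^n M$ gives the standard inequality $\pd\,M \leq \pd\,\Omega^n M + n$, so it suffices to show $\pd\,\Omega^n M \leq \Psi_{[\D]}(V) + 1$.

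To bound $\pd\,\Omega^n M$, I invoke condition (b) of the LIT hypothesis: there is a short exact sequence
$$0 \longrightarrow X_1 \longrightarrow X_0 \longrightarrow \Omega^n M \longrightarrow 0$$
with $X_i = V_i \oplus D_i$, $V_i \in \add\,V$ and $D_i \in \D$. Since $\pd\,\Omega^n M < \infty$, the classical Igusa-Todorov inequality (Theorem \ref{thm_psi}) yields $\pd\,\Omega^n M \leq \Psi(X_0 \oplus X_1) + 1$. Because $\Phidim(\D) = 0$ by condition (a), Proposition \ref{previoLIT}(a) upgrades this to
$$\pd\,\Omega^n M \leq \Psi_{[\D]}(X_0 \oplus X_1) + 1.$$

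It remains to simplify the right-hand side in terms of $V$. Since $\add\,\D = \D$, the module $D_0 \oplus D_1$ lies in $\D$, so Proposition \ref{previoLIT}(b) (applied once with $D = D_0 \oplus D_1$, or twice in succession) absorbs these summands and gives $\Psi_{[\D]}(X_0 \oplus X_1) = \Psi_{[\D]}(V_0 \oplus V_1)$. As $V_0 \oplus V_1 \in \add\,V$, Proposition \ref{prop_propiedades}(g) then produces $\Psi_{[\D]}(V_0 \oplus V_1) \leq \Psi_{[\D]}(V)$. Chaining these estimates yields $\pd\,M \leq \Psi_{[\D]}(V) + n + 1$, and the strict finiteness $\Psi_{[\D]}(V) + n + 1 < \infty$ is built into the definition of $\Psi_{[\D]}$.

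I do not anticipate a serious obstacle here, since every ingredient has been established earlier in the paper; the proof is essentially an orchestration of Theorem \ref{thm_psi}, Proposition \ref{previoLIT}, and Proposition \ref{prop_propiedades}(g). The one delicate point — and the reason condition (a) of the LIT definition demands $\Phidim(\D) = 0$ rather than merely $\Phidim(\D) < \infty$ — is the comparison $\Psi(-) \leq \Psi_{[\D]}(-)$ supplied by Proposition \ref{previoLIT}(a), which is exactly what allows the classical Igusa-Todorov bound to be transferred to the generalised function evaluated on $V$.
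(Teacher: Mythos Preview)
Your proof is correct and follows essentially the same route as the paper's: invoke the LIT short exact sequence for $\Omega^n M$, apply the classical Igusa--Todorov bound (Theorem~\ref{thm_psi}), upgrade $\Psi$ to $\Psi_{[\D]}$ via Proposition~\ref{previoLIT}(a), strip off the $\D$-summands via Proposition~\ref{previoLIT}(b), and finish with Proposition~\ref{prop_propiedades}(g). Your explicit case distinction $\pd\,M \leq n$ versus $\pd\,M > n$ is slightly more careful than the paper's presentation but changes nothing substantive.
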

\begin{proof}
Let $M\in\modu\,(\Lambda)$ with $\pd\,M<\infty.$ Since $\Lambda$ is a $(n,V,\D)$-LIT-algebra, there is an exact sequence 
$$0\longrightarrow X_1\longrightarrow X_0\longrightarrow \Omega^nM\longrightarrow 0,$$ such that $X_1=V_1\oplus D_1$, $X_0=V_0\oplus D_0$, with $V_1,V_0\in \add\,V$ and $D_1,D_0\in \D.$
Then, by Theorem \ref{thm_psi}, we get the following inequalities 
$$\pd\,M\leq \pd\,\Omega^nM + n\leq \Psi(X_1\oplus X_0)+1+n.$$
On the other hand,  Proposition \ref{previoLIT} (a) implies that $\Psi(X_1\oplus X_0)\leq \Psi_{[\D]}(X_1\oplus X_0).$ 
Moreover, from Proposition \ref{prop_propiedades} (g) and Proposition \ref{previoLIT} (b), we obtain
$$\Psi_{[\D]}(X_1\oplus X_0)=\Psi_{[\D]}(V_1\oplus V_0\oplus D_0\oplus D_1) = \Psi_{[\D]}(V_1\oplus V_0)\leq\Psi_{[\D]}(V);$$ 
and thus $\pd\,M\leq\Psi_{[\D]}(V)+n+1;$ proving the result.
\end{proof}

As J. Wei did in \cite{W} we also leave the reader with the question of whether all Artin algebras are $(n,V,\D)$-LIT-algebras. To finish we give two propositions and one corollary about these algebras using several results that have been obtained in the previous sections.

\begin{pro}\label{genProp1} Let $\Lambda$ be an Artin algebra satisfying the property: there is some left saturated class $\X$ in $\modu\,(\Lambda)$ with 
$\Phidim\,(\X)=0,$ and a non-negative integer $n$ such that $\resdim_\X(\Omega^n(\modu\,(\Lambda))\leq 1.$ Then,  $\Lambda$ is a $(n,0,\X)$-LIT-algebra and 
$\findim\,(\Lambda)\leq n+1.$
\end{pro}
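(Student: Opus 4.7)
The plan is to verify both assertions by taking $\D := \X$ and $V := 0$ in the definition of an LIT-algebra, and then applying Theorem~\ref{MainResult}. Everything is designed so that the three clauses fall out of the hypotheses with essentially no extra work; no step is really the main obstacle, but the most subtle point is checking that the choice $V = 0$ together with $\resdim_\X(\Omega^n(\modu\,(\Lambda)))\leq 1$ really does produce the two-term exact sequence required in condition (b) of the definition.

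First I would check condition (a) of the definition of $(n,0,\X)$-LIT-algebra. Because $\X$ is left saturated, by definition it is resolving (hence closed under syzygies, giving $\Omega(\X)\subseteq\X$) and closed under direct summands, so $\add\,\X=\X$. The remaining condition $\Phidim(\X)=0$ is one of the standing hypotheses.

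Next I would verify condition (b) with $V := 0$. Fix $M\in\modu\,(\Lambda)$. The hypothesis $\resdim_\X(\Omega^n(\modu\,(\Lambda)))\leq 1$ applied to $M$ gives an exact sequence
\begin{equation*}
0\longrightarrow D_1\longrightarrow D_0\longrightarrow \Omega^n M\longrightarrow 0
\end{equation*}
with $D_0,D_1\in\X$. Writing $X_i := 0\oplus D_i = V_i\oplus D_i$ with $V_i := 0\in \add\,0$, this is exactly the sequence demanded in condition (b). Thus $\Lambda$ is an $(n,0,\X)$-LIT-algebra.

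Finally, the finitistic dimension bound follows from Theorem~\ref{MainResult}, which gives
\begin{equation*}
\findim\,(\Lambda)\leq \Psi_{[\X]}(0)+n+1.
\end{equation*}
Since $0$ is projective, Proposition~\ref{prop_propiedades}(a) yields $\Phi_{[\X]}(0)=0$, and then $\add\,\Omega^{0}(0)=\{0\}$ has finitistic dimension $0$, so $\Psi_{[\X]}(0)=0$. (Alternatively, $0\in\X$ and Proposition~\ref{previoLIT}(c) gives $\Psi_{[\X]}\mathrm{dim}(\X)=0$ directly.) Therefore $\findim\,(\Lambda)\leq n+1$, as required.
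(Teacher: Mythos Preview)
Your proof is correct and follows essentially the same approach as the paper's own proof: both take $\D:=\X$ and $V:=0$, use that left saturated implies $\add\,\X=\X$ and $\Omega(\X)\subseteq\X$, produce the required short exact sequence from $\resdim_\X(\Omega^n M)\leq 1$, and invoke Theorem~\ref{MainResult}. You simply spell out in more detail why $\Psi_{[\X]}(0)=0$, which the paper leaves implicit.
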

\begin{proof} Since $\X$ is a left saturated class in $\modu\,(\Lambda),$ it follows that $\add\,\X=\X$ and $\Omega(\X)\subseteq\X.$  Let $M\in\modu\,(\Lambda).$ Using that $\resdim_\X(\Omega^nM)\leq 1,$  we get an 
exact sequence $0\to X'\to X\to \Omega^nM\to 0$ with $X,X'\in\X.$ Thus, we can take $V:=0$ and 
$\D:=\X$ to get that the algebra 
$\Lambda$ is a $n$-LIT-algebra. On the other hand, by Theorem 	\ref{MainResult}, we conclude that 
$\findim(\Lambda)\leq n+1.$
\end{proof}

\begin{cor} Let $\Lambda$ be an Artin algebra satisfying the property:  there is some non-negative integer $n$ such that $\Gpd\,\Omega^n(\modu\,\Lambda)\leq 1.$ Then, $\Lambda$ is a $(n,0,\Gproj(\Lambda))$-LIT-algebra and 
$\findim\,(\Lambda)\leq n+1.$
\end{cor}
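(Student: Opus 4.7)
The plan is to deduce this corollary as a direct application of Proposition \ref{genProp1} with $\X:=\Gproj(\Lambda)$. So my job reduces to checking that the three hypotheses of that proposition hold in the setting of the corollary.

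First I would invoke Lemma \ref{EjemploBasico} to observe that $\Gproj(\Lambda)$ is a left saturated subclass of $\modu\,(\Lambda)$ and that $\Phidim\,(\Gproj(\Lambda))=0$. This simultaneously verifies the ``left saturated'' hypothesis and the ``$\Phidim=0$'' hypothesis required by Proposition \ref{genProp1}. Next I would translate the standing assumption of the corollary: by definition $\Gpd(M)=\resdim_{\Gproj(\Lambda)}(M)$, so the condition $\Gpd\,\Omega^n(\modu\,\Lambda)\leq 1$ is literally the same as $\resdim_{\Gproj(\Lambda)}(\Omega^n(\modu\,(\Lambda)))\leq 1$, which is the remaining resolution-dimension hypothesis of Proposition \ref{genProp1}.

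Having verified all the hypotheses, Proposition \ref{genProp1} (applied with $\X:=\Gproj(\Lambda)$) yields directly that $\Lambda$ is a $(n,0,\Gproj(\Lambda))$-LIT-algebra and that $\findim\,(\Lambda)\leq n+1$. There is essentially no obstacle here; the only mild point to be careful about is noting that the data $V=0$ and $\D=\Gproj(\Lambda)$ produced by Proposition \ref{genProp1} exactly match the triple $(n,0,\Gproj(\Lambda))$ claimed in the corollary. Thus the entire proof is a two-line invocation of \ref{EjemploBasico} followed by \ref{genProp1}.
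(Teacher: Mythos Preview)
Your proposal is correct and matches the paper's proof essentially verbatim: the paper also takes $\X:=\Gproj(\Lambda)$, invokes Lemma \ref{EjemploBasico} to verify the left-saturated and $\Phidim=0$ hypotheses, and then applies Proposition \ref{genProp1}. Your added remark that $\Gpd(-)=\resdim_{\Gproj(\Lambda)}(-)$ by definition makes explicit the identification that the paper leaves implicit.
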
 
\begin{proof} By taking $\X:=\Gproj(\Lambda)$ in  Proposition \ref{genProp1}, and by applying Lemma \ref{EjemploBasico}, we get that $\Lambda$ is a 
$(n,0,\Gproj(\Lambda))$-LIT-algebra and 
$\findim\,(\Lambda)\leq n+1.$ 
\end{proof}

\begin{pro} Let $\Lambda$ be an Artin algebra and $T\in\modu\,(\Lambda)$ be a cotilting $\Lambda$-module such that 
$\Phidim({}^\perp T)=0$ and $\id\,T\leq 1.$ Then, for any $n\geq 0,$ $\Lambda$ is an $(n, 0,{}^\perp T)$-LIT algebra and 
$\findim(\Lambda)\leq \Phidim(\Lambda)\leq\Psidim(\Lambda)\leq 1.$
\end{pro}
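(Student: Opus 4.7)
The statement splits into two claims: that $\Lambda$ is an $(n,0,{}^\perp T)$-LIT algebra, and the chain of inequalities $\findim(\Lambda)\leq\Phidim(\Lambda)\leq\Psidim(\Lambda)\leq 1$. I will address them in that order, the second one reducing to an immediate specialisation of an earlier result.

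For the first claim, I set $\D:={}^\perp T$ and $V:=0$, and then verify the two requirements of the LIT-algebra definition. Condition (a) holds because Lemma \ref{LemaTCH}(c) establishes that ${}^\perp T$ is left saturated, so in particular $\add\,\D=\D$ and $\Omega(\D)\subseteq\D$; and the hypothesis supplies $\Phidim(\D)=0$. For condition (b), I take an arbitrary $M\in\modu\,(\Lambda)$ and invoke Theorem \ref{AppCotilting}(a), which gives $\resdim_{{}^\perp T}(\modu\,(\Lambda))=\id(T)\leq 1$. In particular, $\Omega^n M$ has ${}^\perp T$-resolution dimension at most one, so there is an exact sequence $0\to X_1\to X_0\to \Omega^n M\to 0$ with $X_0,X_1\in{}^\perp T$ (taking $X_1=0$ if $\Omega^n M\in{}^\perp T$). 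Setting $V_0=V_1=0$ and $D_i=X_i$ in the definition completes the verification, and note that this works uniformly for every $n\geq 0$.

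For the dimension bound, the assumption $\Phidim({}^\perp T)=0$ allows me to apply Theorem \ref{AppCotilting}(c) directly, yielding $\findim(\Lambda)\leq\Phidim(\Lambda)\leq\Psidim(\Lambda)\leq\id(T)\leq 1$, as required. No real obstacle arises in this argument, since all the substantive work has already been carried out in the preceding results; the proposition is essentially a packaging of Theorem \ref{AppCotilting} within the LIT-algebra formalism, translating the cotilting data $(T,{}^\perp T)$ into an instance of the $(n,V,\D)$ framework.
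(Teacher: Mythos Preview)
Your proof is correct and follows essentially the same route as the paper. The only cosmetic difference is that the paper packages the verification of the LIT conditions by invoking Proposition~\ref{genProp1} (after bounding $\resdim_{{}^\perp T}(\Omega^n(\modu\,(\Lambda)))$ via Lemma~\ref{previoLIT3(c)}), whereas you verify the definition directly and obtain the resolution bound from Theorem~\ref{AppCotilting}(a); both paths use Theorem~\ref{AppCotilting}(c) for the final inequality.
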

\begin{proof} Since $T$ is cotilting, by Lemma \ref{LemaTCH} we know that  $({}^\perp T,(\add\,T)^\wedge)$ is a hereditary complete cotorsion pair in 
$\modu\,(\Lambda),$  $\add\,T={}^\perp T\cap(\add\,T)^\wedge,$  $({}^\perp T)^\wedge=\modu\,(\Lambda)$ and  
$\add(T)$ is a ${}^\perp T$-injective and a relative cogenerator in the left saturated class ${}^\perp T.$ Moreover by Lemma \ref{previoLIT3(c)}, we have that 
$\resdim_{{}^\perp T}(\Omega^n(\modu\,(\Lambda)))\leq\id(T), \forall n\geq 0.$ Thus, by  Proposition \ref{genProp1}, it follows that $\Lambda$ is an $(n, 0,{}^\perp T)$-LIT algebra, for any $n\geq 0.$ Furthermore, from Theorem \ref{AppCotilting} (c), we get that 
$\findim(\Lambda)\leq \Phidim(\Lambda)\leq\Psidim(\Lambda)\leq \id\,T\leq 1.$ 
\end{proof}

\bibliographystyle{unsrt}

\end{document}